\def\be{\begin{equation}}
\def\ee{\end{equation}}
\def\bea{\begin{eqnarray}}
\def\eea{\end{eqnarray}}
\def\bea*{\begin{eqnarray*}}
\def\eea*{\end{eqnarray*}}
\def\bt{\begin{theorem}}
\def\et{\end{theorem}}
\def\bl{\begin{lemma}}
\def\el{\end{lemma}}
\def\br{\begin{remark}}
\def\er{\end{remark}}
\def\bc{\begin{corollary}}
\def\ec{\end{corollary}}
\def\bd{\begin{definition}}
\def\ed{\end{definition}}
\def\bd{\begin{algorithm}}
\def\ed{\end{algorithm}}
\numberwithin{equation}{section}
\numberwithin{figure}{section}
\theoremstyle{plain}
\newtheorem{theorem}{Theorem}[section]
\newtheorem{corollary}{Corollary}[section]
\newtheorem{lemma}{Lemma}[section]
\newtheorem{remark}{Remark}[section]
\newtheorem*{problem1}{Problem P$_1$}
\newtheorem*{problem2}{Problem P$_2$}
\newtheorem{algorithm}{Algorithm}[section]
\titlespacing*{\subsubsection}{0pt}{*1.0}{*1.0}
\titleformat*{\subsubsection}{\itshape}
\begin{document}
\title{A Gradient-based Kernel Optimization Approach for Parabolic Distributed Parameter Control Systems\footnote{This work was supported by the National Natural Science Foundation of China (F030119-61104048, 61320106009) and Fundamental Research Funds for the Central Universities (2014FZA5010).}}
\author{Zhigang Ren$^{1}$, Chao Xu$^{1,}$\footnote{Correspondence to: Chao Xu, Email: cxu@zju.edu.cn}, Qun Lin$^2$, and Ryan Loxton$^{2}$}
\date{}
\maketitle
\begin{center}
{\small
$^1$State Key Laboratory of Industrial Control Technology and Institute of Cyber-Systems \& Control, Zhejiang University, Hangzhou, Zhejiang, China\\
$^2$Department of Mathematics \& Statistics, Curtin University, Perth, Western Australia, Australia}
\end{center}

\begin{abstract}
This paper proposes a new gradient-based optimization approach for designing optimal feedback kernels for parabolic distributed
parameter systems with boundary control. Unlike traditional kernel optimization methods for parabolic systems, our new method does not require solving non-standard Riccati-type or Klein-Gorden-type partial differential equations (PDEs). Instead, the feedback kernel is parameterized as a second-order polynomial whose coefficients are decision variables to be tuned via gradient-based dynamic optimization, where the gradient of the system cost functional (which penalizes both kernel and output magnitude) is computed by solving a so-called ``costate'' PDE in standard form. Special constraints are imposed on the kernel coefficients to ensure that, under mild conditions, the  optimized kernel yields closed-loop stability.
Numerical simulations demonstrate the effectiveness of the proposed approach.
\\
\\[-0.5\baselineskip]
\noindent\textbf{Key words:} Gradient-based Optimization, Feedback Kernel Design, Boundary Stabilization, Parabolic PDE Systems
\end{abstract}

\section{Introduction} \label{S:intro}

Spatial-temporal evolutionary processes (STEPs) are usually modeled by partial differential equations (PDEs), which are commonly referred to  as distributed parameter systems (DPS) in the field of automatic control. The parabolic system is an important type of DPS that describes a wide range of natural phenomena, including diffusion, heat transfer,  and fusion plasma transport. Over the past few decades, control theory for the parabolic DPS has developed into a mature research topic at the interface of engineering and applied mathematics~\cite{curtain1995introduction, bensoussan2007, liu2010elementary}.

There are two main control synthesis approaches for the parabolic DPS: the \textit{design-then-discretize} framework and the \textit{discretize-then-design} framework.
In the design-then-discretize framework, analytical techniques are first applied to derive equations for the optimal controller---e.g., the Riccati equation for optimal control or the backstepping kernel equation for boundary stabilization---and  then these equations are  solved using numerical discretization techniques. In the discretize-then-design framework, the sequence is reversed: the infinite dimensional PDE system is first discretized to obtain a finite dimensional system, and then various controller synthesis and numerical optimization techniques are applied to solve the corresponding discretized problem. Popular approaches for obtaining the finite dimensional approximation in the discretize-then-design framework include  mesh-associated discretization techniques and model order reduction (MOR) techniques. Examples of mesh-associated discretization techniques include the finite difference method~\cite{strikwerda2007}, the finite element method~\cite{strikwerda2007}, the finite volume method~\cite{strikwerda2007}, and the spectral method~\cite{ferziger1996}. MOR methods include the proper orthogonal decomposition method~\cite{holmes1998turbulence} and the balanced truncation method~\cite{moore1981principal}, both of which exploit system input-output properties~\cite{antoulas2005app}. Since MOR techniques can generate low-order models without compromising solution accuracy, they are  popular for dealing with complex STEPs, which arise frequently in applications such as plasma physics, fluid flow, and heat and mass transfer (e.g., see~\cite{xu2011seq} and the references cited therein).

The linear quadratic (LQ) control framework, a widely-used technique in controller synthesis, is well-defined in infinite dimensional function spaces to deal with the parabolic DPS (e.g.,~\cite{bensoussan2007,curtain1995introduction}). However, the LQ control framework requires solving Riccati-type differential equations, which are nonlinear parabolic PDEs of dimension one greater than the original parabolic PDE system. For example, to generate an optimal feedback controller for a scalar heat equation, a Riccati PDE defined over a rectangular domain must be solved~\cite{moura2013optimal}. Hence, the LQ approach does not actually solve the controller synthesis problem directly, but instead converts it into another problem (i.e., solve a Riccati-type PDE) that is still extremely difficult to solve from a computational point of view.

One of the major advances in PDE control in recent years has been the so-called infinite dimensional Voltera integral feedback, or the backstepping method~(e.g., \cite{kr4, liu}). Instead of Riccati-type PDEs, the backstepping method requires solving the so-called kernel equations---linear Klein-Gorden-type PDEs for which the successive approach can be used to obtain explicit solutions. This method was originally developed for the stabilization of one dimensional parabolic DPS and then extended to fluid flows~\cite{vazquez2007}, magnetohydrodynamic flows~\cite{vazquezM, xu2008stab}, and elastic vibration~\cite{Krstic2008}. In addition, the backstepping method can also be applied to achieve full state feedback stabilization and state estimation of PDE-ODE cascade systems~\cite{antonio2010control}.

In this paper, we propose a new framework for control synthesis for the parabolic DPS. This new framework does not require solving Riccati-type or Klein-Gorden-type PDEs. Instead, it requires solving a so-called ``costate'' PDE, which is much easier to solve from a computational viewpoint. In fact, many numerical software packages, such as Comsol Multiphysics and MATLAB PDE ToolBox, can be used to generate numerical solutions for the costate PDE. The Riccati PDEs, on the other hand, are usually not in standard form and thus cannot be solved using off-the-shelf software packages. The optimization approach proposed in this paper is motivated by the well-known PID tuning problem, in which the coefficients in a PID controller need to be selected judiciously to optimize system performance. Relevant literature includes reference \cite{kill2006}, where extremum seeking algorithms are used to tune the PID parameters; reference \cite{li2011optimal}, where the PID tuning problem is reformulated into a nonlinear optimization problem, and subsequently solved using numerical optimization techniques; and reference~\cite{xu2007optimal}, where the iterative learning tuning method is used to update the PID parameters whenever a control task is repeated.
The current paper can be viewed as an extension of these optimization-based feedback design ideas to infinite dimensional systems.

The remainder of this paper is organized as follows. In Section \ref{sec:statement}, we formulate two parameter optimization problems for a class of unstable linear parabolic diffusion-reaction PDEs with control actuation at the boundary: the first problem involves optimizing a set of parameters that govern  the feedback kernel; the second problem is a modification of the first problem with additional constraints to ensure closed-loop stability. In Section~\ref{sec:VariationalAnalysis}, we derive the gradients of the cost and constraint functions for the optimization problems in Section \ref{sec:statement}. Then, in Section \ref{sec:NumericalAlgorithms}, we present a numerical algorithm, which is based on the results obtained in Section \ref{sec:VariationalAnalysis}, for determining the optimal feedback kernel. Section~\ref{sec:Numerical Simulations}  presents the numerical simulation results. Finally, Section \ref{sec:conclusion} concludes the paper by proposing some further research topics.

\section{Problem Formulation}\label{sec:statement}
\subsection{Feedback Kernel Optimization}
We consider the following parabolic PDE system:
\begin{subequations}\label{pde0}
\begin{numcases}{}
y_t(x,t) = y_{xx}(x,t) + cy(x,t),\\
y(0,t) = 0,\\
y(1,t) = u(t),\\
y(x,0) = y_0(x),
\end{numcases}
\end{subequations}
where $c>0$ is a given constant and $u(t)$ is a boundary control. It is well known that the uncontrolled version of system (\ref{pde0}) is unstable when the constant $c$ is sufficiently large~\cite{kr4}. Thus, it is necessary to design an appropriate feedback control law for $u(t)$ to stabilize the system.
According to the LQ control~\cite{moura2013optimal} and backstepping synthesis approaches~\cite{kr4}, the optimal feedback control law takes the following form:
\begin{equation}\label{equfeedback}
u(t)=\int_0^1\mathcal K(1,\xi)y(\xi,t)d\xi,
\end{equation}
where the feedback kernel $\mathcal K(1,\xi)$ is obtained by solving either a Riccati-type or a Klein-Gorden-type PDE. By introducing the new notation $k(\xi)=\mathcal K(1,\xi)$, we can write the feedback control policy (\ref{equfeedback}) in the following form:
\begin{equation}\label{ukenerl}
u(t)=\int_0^1 k(\xi)y(\xi,t)d\xi.
\end{equation}
The corresponding closed-loop system is
\begin{subequations}\label{closed-pde}
\begin{numcases}{}
y_t(x,t) = y_{xx}(x,t) + cy(x,t), \label{closed-pdef}\\
y(x,0) = y_0(x), \label{closed-pdeini}\\
y(0,t) = 0, \label{closed-pdeifl}\\
y(1,t) = \int_0^1 k(\xi)y(\xi,t)d\xi. \label{cpdefr11}
\end{numcases}
\end{subequations}
In reference~\cite{kr4}, the backstepping method is used to express the optimal feedback kernel in terms of the first-order modified Bessel function. More specifically,
\be
\mathcal K(1,\xi)=-c\xi \frac{I_1(\sqrt{c(1-\xi^2)})}{\sqrt{c(1-\xi^2)}}, \label{bsKernel}
\ee
where $I_1$ is the first-order modified Bessel function given by
\be
I_1(\omega)=\sum_{n=0}^{\infty}\frac{\omega^{2n+1}}{2^{2n+1}n!(n+1)!}.\nonumber
\ee
The feedback kernel (\ref{bsKernel}) is plotted  in Figure \ref{fig:kernelbackst} for different values of $c$. Note that its shape is similar to a quadratic function. Note also that $\mathcal K(1,\xi)=0$ when $\xi=0$. Accordingly, motivated by the quadratic behavior exhibited in Figure \ref{fig:kernelbackst}, we express $k(\xi)$ in the following parameterized form:
\begin{equation}\label{kequation}
k(\xi;\Theta) = \theta_1 \xi + \theta_2 \xi^2,
\end{equation}
where $\Theta=(\theta_1,\theta_2)^\top$ is a parameter vector to be optimized.

Moreover, we assume that the parameters must satisfy the following bound constraints:
\begin{equation}\label{kernelbound}
a_1\leq\theta_1\leq b_1, \quad a_2\leq\theta_2\leq b_2,
\end{equation}
where $a_1$, $a_2$, $b_1$ and $b_2$ are given bounds.
\begin{figure}
\begin{center}
\includegraphics[width=4.0in]{./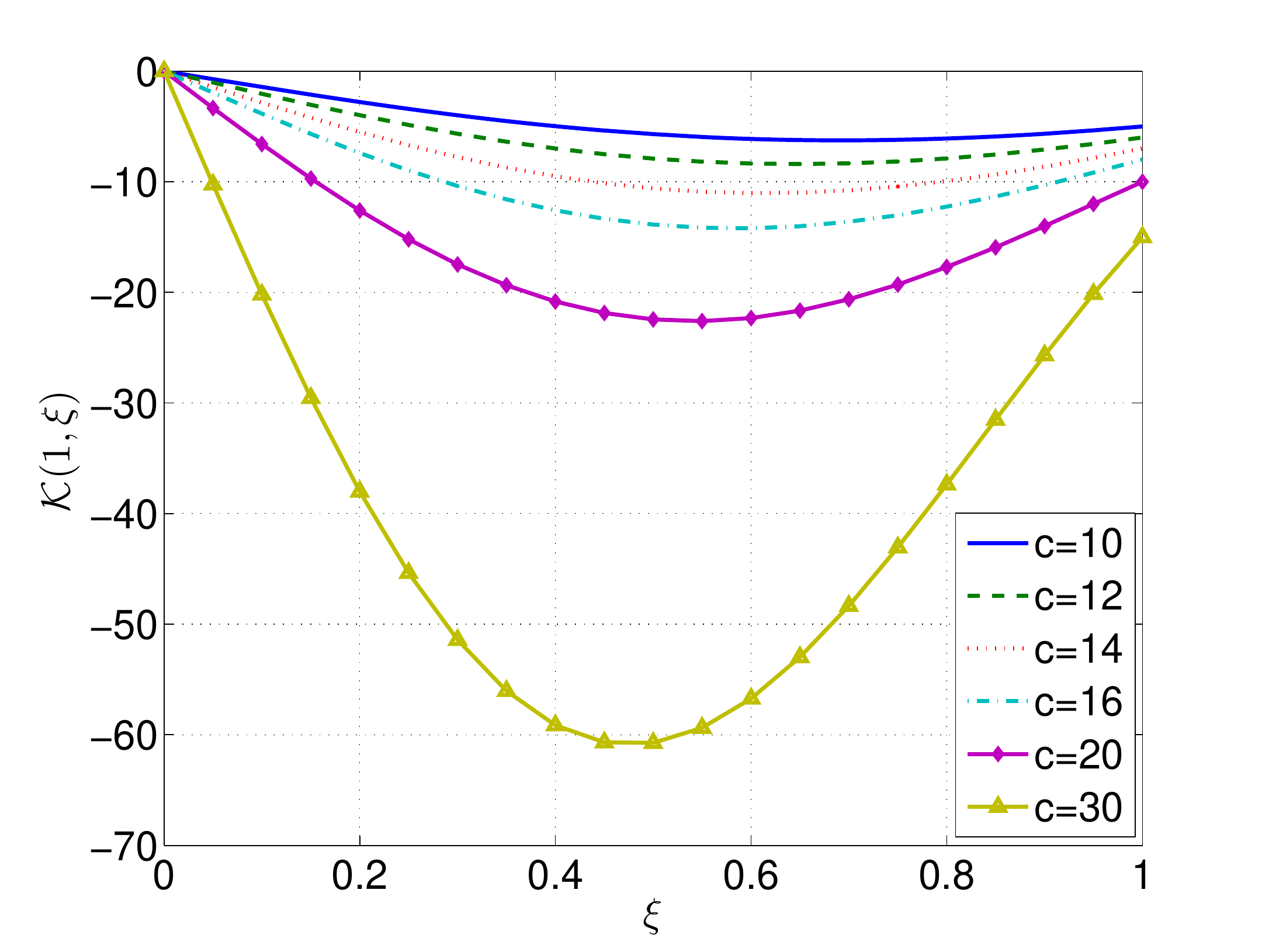}
\caption{The feedback kernel (\ref{bsKernel}) for various values of $c$.}\label{fig:kernelbackst}
\end{center}
\end{figure}

Let 
$y(x,t;\Theta)$ denote the solution of the closed-loop system (\ref{closed-pde}) with the parameterized kernel (\ref{kequation}). The results in \cite{triggiani1980well} ensure that such a solution exists and is unique.
Our goal is to stabilize the closed-loop system with minimal energy input. Accordingly, we consider the following cost functional:
\begin{equation}\label{optcostfun}
\begin{aligned}
g_0(\Theta)=\frac{1}{2}\int_0^T\int_0^1 y^2(x,t;\Theta)dxdt+\frac{1}{2}\int_0^1k^2(x;\Theta)dx.\\
\end{aligned}
\end{equation}
This cost functional consists of two terms: the first term penalizes output deviation from zero (stabilization); the second term penalizes kernel magnitude (energy minimization).
We now state our kernel optimization problem formally as follows.
\begin{problem1}
Given the PDE system (\ref{closed-pde}) with the parameterized kernel (\ref{kequation}), find an optimal parameter vector $\Theta=(\theta_1, \theta_2)^\top$ such that the cost functional (\ref{optcostfun}) is minimized subject to the bound constraints (\ref{kernelbound}).
\end{problem1}

\subsection{Closed-Loop Stability}\label{sec:stability verification}
Since (\ref{optcostfun}) is a finite-time  cost functional, there is no guarantee that the optimized kernel (\ref{kequation}) generated by the solution of Problem P$_1$ stabilizes the closed-loop system (\ref{closed-pde}) as $t\rightarrow\infty$. ~~Nevertheless, we now show that, by analyzing the solution structure of (\ref{closed-pde}), ~additional constraints can be added ~to ~Problem ~P$_1$ to ensure closed-loop stability. 

Using the separation of variables approach, we decompose $y(x,t)$ as follows:
\begin{equation} \label{definitionY}
y(x,t) = \mathcal X(x)\mathcal{T}(t).
\end{equation}
Substituting (\ref{definitionY}) into (\ref{closed-pdef}), we obtain
\begin{equation}\label{changed-pde1}
\begin{aligned}
\mathcal X(x)\dot {\mathcal{T}}(t) = \mathcal X''(x)\mathcal{T}(t)+c \mathcal X(x)\mathcal{T}(t),\\
\end{aligned}
\end{equation}
where
\begin{equation*}
\begin{aligned}
\dot {\mathcal{T}}(t)&=\frac{d\mathcal{T}(t)}{dt},\\
\mathcal X''(x)&=\frac{d^2 \mathcal X(x)}{dx^2}.
\end{aligned}
\end{equation*}
Furthermore, from the boundary conditions (\ref{closed-pdeifl}) and (\ref{cpdefr11}),
\begin{equation}\label{changed-pdeinitial}
\begin{aligned}
\mathcal X(0)\mathcal{T}(t) = 0,\quad \mathcal X(1)\mathcal{T}(t) = \int_0^1 k(\xi;\Theta)\mathcal X(\xi)\mathcal{T}(t)d\xi.
\end{aligned}
\end{equation}
Thus, we immediately  obtain
\begin{equation} \label{definition-inY}
\mathcal X(0)=0,
\end{equation}
\begin{equation} \label{definition-inY1}
\mathcal X(1)=\int_0^1k(\xi;\Theta)\mathcal X(\xi)d\xi.
\end{equation}
Rearranging (\ref{changed-pde1}) gives
\begin{equation*}
\frac{\mathcal X''(x)+c\mathcal X(x)}{\mathcal X(x)}=\frac{\dot {\mathcal{T}}(t)}{\mathcal{T}(t)}.
\end{equation*}
This equation must hold for all $x$ and $t$. Hence, there exists a constant $\sigma$ (an eigenvalue) such that
\begin{equation} \label{definitionY1}
\frac{\mathcal X''(x)+c\mathcal X(x)}{\mathcal X(x)}=\frac{\dot {\mathcal{T}}(t)}{\mathcal{T}(t)}=\sigma.
\end{equation}
Clearly,
\begin{equation}\label{changed-pdesolution0}
\mathcal T(t)=T_0\mathrm{e}^{\sigma t},
\end{equation}
where $T_0=\mathcal T(0)$ is a constant to be determined.

To solve for $\mathcal X(x)$, we must consider three cases: (i) $c<\sigma$; (ii) $c=\sigma$; (iii) $c>\sigma$.
In cases (i) and (ii), the general solutions of (\ref{definitionY1}) are, respectively,
\begin{equation*}
  \mathcal X(x)=X_0\mathrm{e}^{\sqrt{\sigma-c} x}+X_1 \mathrm{e}^{-\sqrt{\sigma-c} x},
\end{equation*}
and
\begin{equation*}
  \mathcal X(x)=X_0+X_1 x,
\end{equation*}
where $X_0$ and $X_1$ are constants to be determined from the boundary conditions (\ref{definition-inY}) and (\ref{definition-inY1}).
Then the corresponding solutions of (\ref{closed-pde}) are
\begin{equation*}
  y(x,t)=X_0T_0\mathrm{e}^{\sqrt{\sigma-c} x+\sigma t}+X_1 T_0 \mathrm{e}^{-\sqrt{\sigma-c} x+\sigma t},
\end{equation*}
and
\begin{equation*}
  y(x,t)=X_0T_0\mathrm{e}^{\sigma t}+X_1T_0 x \mathrm{e}^{\sigma t}.
\end{equation*}
These solutions are clearly unstable because $0<c\leq\sigma$. Thus, we want to choose the parameters $\theta_1$ and $\theta_2$ so that the unique solution of (\ref{closed-pde}) satisfies case (iii) instead of cases (i) and (ii).
%

In case (iii), the general solution of (\ref{definitionY1}) is
\begin{equation}\label{changed-pdesolution}
\mathcal X(x)=X_0\cos(\sqrt{c-\sigma} x)+X_1\sin(\sqrt{c-\sigma} x),
\end{equation}
where $X_0$ and $X_1$ are  constants to be determined from the boundary conditions (\ref{definition-inY}) and (\ref{definition-inY1}).
Substituting (\ref{changed-pdesolution}) into (\ref{definition-inY}), we obtain
\begin{equation}
\mathcal X(0)=X_0=0.
\end{equation}
Hence,
\begin{equation} \label{definX}
\mathcal X(x)=X_1\sin(\sqrt{c-\sigma}x).
\end{equation}
To simplify the notation, we introduce a new variable $\alpha=\sqrt{c-\sigma}$.
Substituting (\ref{definX}) into condition (\ref{definition-inY1}), we have
\begin{equation*}\label{definition-x11}
\begin{aligned}
X_1\sin\alpha=X_1\int_0^1\theta_1 \xi\sin(\alpha \xi)d\xi+X_1\int_0^1\theta_2 \xi^2\sin(\alpha \xi)d\xi,
\end{aligned}
\end{equation*}
and thus
\begin{equation}\label{definition-x1}
\begin{aligned}
\sin\alpha=\int_0^1\theta_1 \xi\sin(\alpha \xi)d\xi+\int_0^1\theta_2 \xi^2\sin(\alpha \xi)d\xi.
\end{aligned}
\end{equation}
Evaluating the first integral on the right hand side of (\ref{definition-x1}) gives
\begin{equation}\label{integal1}
\begin{aligned}
\int_0^1\theta_1 \xi\sin(\alpha \xi)d\xi&
=\theta_1\left(\frac{\sin\alpha}{\alpha^2}-\frac{\cos\alpha}{\alpha}\right).
\end{aligned}
\end{equation}
Evaluating the second integral on the right hand side of (\ref{definition-x1}) gives
\begin{equation}\label{integal2}
\begin{aligned}
\int_0^1\theta_2 \xi^2\sin(\alpha \xi)d\xi
&=-\theta_2\left(\frac{\cos\alpha}{\alpha}-\frac{2\sin\alpha}{\alpha^2}\right)+\frac{2\theta_2(\cos\alpha-1)}{\alpha^3}.\\
\end{aligned}
\end{equation}
Thus, using (\ref{integal1}) and (\ref{integal2}), (\ref{definition-x1}) can be simplified as
\begin{equation}\label{change-lambda-theda1-theda2}
\begin{aligned}
(\theta_1\alpha^2+\theta_2\alpha^2-2\theta_2)\cos\alpha+(\alpha^3-\theta_1\alpha-2\theta_2\alpha)\sin\alpha+2\theta_2=0.
\end{aligned}
\end{equation}
The following result, the proof of which is deferred to the appendix, is fundamental to our subsequent analysis.
\begin{lemma}\label{lemma1}
Suppose $\Theta=(\theta_1,\theta_2)^\top$ satisfies the following inequality:
\begin{equation}\label{constraint00}
\theta_1^2+\theta_2^2+2\theta_1\theta_2-2\theta_1-4\theta_2\geq0.
\end{equation}
Then equation (\ref{change-lambda-theda1-theda2}) has an infinite number of positive solutions.
\end{lemma}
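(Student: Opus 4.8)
Write $F(\alpha)$ for the left-hand side of~(\ref{change-lambda-theda1-theda2}); the assertion is equivalent to saying that $F$ has infinitely many zeros in $(0,\infty)$. The plan is to observe that among the terms constituting $F$ the cubic term $\alpha^{3}\sin\alpha$ has strictly the fastest growth in $\alpha$, so that for large $\alpha$ the sign of $F$ is dictated by $\sin\alpha$; infinitely many zeros then follow by applying the intermediate value theorem on consecutive half-periods of the sine. An equivalent and slightly more transparent starting point is the normalized form of~(\ref{change-lambda-theda1-theda2}) obtained by dividing through by $\alpha^{3}$ and using~(\ref{integal1})--(\ref{integal2}), namely $\sin\alpha=\theta_1\int_0^1\xi\sin(\alpha\xi)\,d\xi+\theta_2\int_0^1\xi^{2}\sin(\alpha\xi)\,d\xi$, whose right-hand side is $O(\alpha^{-1})$ as $\alpha\to\infty$; this makes it visible that the large oscillations of $\sin\alpha$ survive the perturbation.

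Concretely, I would test $F$ at the points $\alpha_k=\bigl(k+\tfrac12\bigr)\pi$ for $k=1,2,\dots$, where $\cos\alpha_k=0$ and $\sin\alpha_k=(-1)^k$. At such a point~(\ref{change-lambda-theda1-theda2}) collapses to the exact value
\[
F(\alpha_k)=(-1)^{k}\bigl(\alpha_k^{3}-(\theta_1+2\theta_2)\alpha_k\bigr)+2\theta_2 .
\]
Since $\alpha_k^{3}-(\theta_1+2\theta_2)\alpha_k$ is a cubic in $\alpha_k$ with leading coefficient $1$, it tends to $+\infty$, so there is an index $K$, depending only on $|\theta_1+2\theta_2|$ and $|\theta_2|$, such that $F(\alpha_k)>0$ for every even $k\ge K$ and $F(\alpha_k)<0$ for every odd $k\ge K$. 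As $F$ is continuous and $\alpha_k<\alpha_{k+1}$, the intermediate value theorem gives a zero of $F$ in each interval $(\alpha_k,\alpha_{k+1})$ with $k\ge K$; these intervals are pairwise disjoint, hence $F$ has infinitely many positive zeros, as claimed.

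Two comments. First, the cubic-dominance argument above does not actually invoke hypothesis~(\ref{constraint00}); I would nevertheless retain that hypothesis in the statement, since it is precisely the inequality that will be imposed in the stabilizing variant of Problem~P$_1$ and that enters the closed-loop stability discussion, where what matters is the position of the \emph{smallest} positive root of~(\ref{change-lambda-theda1-theda2}) relative to $\sqrt{c}$ rather than the mere existence of roots. Second, the only step requiring any care is the quantitative one---exhibiting $K$ explicitly and verifying that it depends on nothing but $\theta_1$ and $\theta_2$, so that the conclusion is unconditional---but this is an elementary one-variable estimate and I do not foresee it as a genuine obstacle; the rest is continuity together with the intermediate value theorem.
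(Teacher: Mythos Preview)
Your argument is correct and in fact more elementary than the paper's. The paper rewrites (\ref{change-lambda-theda1-theda2}) in amplitude--phase form $Q(\alpha)\sin(\alpha+\varphi(\alpha))=-2\theta_2$, proves that the phase shift $\varphi(\alpha)\to 0$ and is continuous for large $\alpha$, and then applies the intermediate value theorem on intervals $[k\pi-\epsilon,k\pi+\epsilon]$. Your choice of the test points $\alpha_k=(k+\tfrac12)\pi$, where the cosine term drops out entirely, bypasses the amplitude--phase machinery and the somewhat delicate continuity argument for $\varphi$; the sign alternation then follows from a single-variable cubic estimate.

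What the paper's longer route buys is sharper localization: it shows that for every $\epsilon>0$ there is a root in $[k\pi-\epsilon,k\pi+\epsilon]$ for all large $k$, i.e.\ the roots asymptotically coincide with integer multiples of $\pi$. This is exploited later (after Theorem~\ref{theorem0}) in the heuristic justification of the span condition, where the authors argue that $\mathrm{span}\{\sin(\alpha_n^\ast x)\}$ should be ``approximately'' $\mathrm{span}\{\sin(n\pi x)\}$. Your argument only places a root in each interval of length $\pi$ centered at a half-integer multiple of $\pi$, which suffices for the lemma as stated but does not immediately give that asymptotic picture. Your observation that hypothesis~(\ref{constraint00}) is not actually used is also accurate, and the same is true of the paper's proof: the claim $Q(\alpha)\to\infty$ holds regardless, since $Q(\alpha)^2$ is a sum of two squares with leading term $\alpha^6$.
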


For any  $\alpha$ satisfying (\ref{change-lambda-theda1-theda2}), there exists a corresponding solution of (\ref{definitionY1}) in the form (\ref{definX}). Let $\{\alpha_n\}_{n=1}^\infty$ be a sequence of positive solutions of (\ref{change-lambda-theda1-theda2}). Then the general solution of (\ref{definitionY1}) is
\begin{equation}\label{xsolution}
\begin{aligned}
  \mathcal X(x)&=\sum_{n=1}^\infty A_n \sin(\alpha_n x),\\
\end{aligned}
\end{equation}
where $A_n$ are constants to be determined. The corresponding eigenvalues are
\begin{equation}\label{eigenvalues0}
  \sigma_n=c- \alpha_n^2, \quad n=1,2,3,\dots
\end{equation}
Hence, using (\ref{changed-pdesolution0}),
\begin{equation}\label{ysolution0}
\begin{aligned}
  y(x,t)=\sum_{n=1}^\infty T_0 A_n\mathrm{e}^{(c- \alpha_n^2) t} \sin(\alpha_n x).
\end{aligned}
\end{equation}
By virtue of (\ref{definition-inY}) and (\ref{definition-inY1}), this solution satisfies the boundary conditions (\ref{closed-pdeifl}) and (\ref{cpdefr11}). The constants $T_0$ and $A_n$  must be selected appropriately so that the initial condition (\ref{closed-pdeini}) is also satisfied. To ensure stability as $t\rightarrow \infty$, each eigenvalue $\sigma_n=c- \alpha_n^2$ in (\ref{ysolution0}) must be negative. Thus, 
we impose the following constraints on $\Theta=(\theta_1,\theta_2)^\top$:
%
%
\begin{subequations}\label{constraints}
\begin{align}
&\theta_1^2+\theta_2^2+2\theta_1\theta_2-2\theta_1-4\theta_2\geq0,\label{constraint0} \\
&c- \alpha^2\leq-\epsilon, \label{constraint1} \\
&(\theta_1\alpha^2+\theta_2\alpha^2-2\theta_2)\cos\alpha+(\alpha^3-\theta_1\alpha-2\theta_2\alpha)\sin\alpha+2\theta_2=0, \label{constraint2}
\end{align}
\end{subequations}
where $\epsilon$ is a given positive parameter and $\alpha$ is the smallest positive solution of (\ref{change-lambda-theda1-theda2}). Note that $\alpha$ here is treated as an additional optimization variable. Constraint (\ref{constraint0}) ensures that there are an infinite number of eigenvalues (see Lemma \ref{lemma1}) and thus the solution form (\ref{ysolution0}) is valid. Constraints (\ref{constraint1}) and (\ref{constraint2}) ensure that the largest eigenvalue is negative, thus guaranteeing solution stability. Adding constraints (\ref{constraints}) to Problem P$_1$ yields the following modified problem.
\begin{problem2}
Given the PDE system (\ref{closed-pde}) with the parameterized kernel (\ref{kequation}), choose $\Theta=(\theta_1,\theta_2)^\top$ and $\alpha$ such that the cost functional (\ref{optcostfun}) is minimized subject to the bound constraints (\ref{kernelbound}) and the nonlinear constraints (\ref{constraints}).
\end{problem2}

The next result is concerned with the stability of the closed-loop system corresponding to the optimized kernel from Problem P$_2$.
\begin{theorem}\label{theorem0}
Let $(\Theta^\ast,\alpha^\ast)$ be an optimal solution of Problem P$_2$, where $\alpha^\ast$ is the smallest positive solution of equation (\ref{constraint2}) corresponding to $\Theta^\ast$. Suppose that there exists a sequence $\{\alpha_n^\ast\}_{n=1}^\infty$ of positive solutions to equation (\ref{constraint2}) corresponding to $\Theta^\ast$ such that $y_0(x)\in \mathrm{span}\{\sin(\alpha_n^\ast x)\}$. Then the closed-loop system (\ref{closed-pde}) corresponding to $\Theta^\ast$ is stable.
\end{theorem}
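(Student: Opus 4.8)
The plan is to combine the separation-of-variables representation derived above with the constraint set of Problem~P$_2$ to show that every temporal mode decays exponentially. First I would invoke Lemma~\ref{lemma1}: since $\Theta^\ast$ satisfies inequality (\ref{constraint0}), equation (\ref{change-lambda-theda1-theda2}) (equivalently (\ref{constraint2})) has infinitely many positive roots, so the sequence $\{\alpha_n^\ast\}_{n=1}^\infty$ postulated in the hypothesis is well defined and the eigenfunction family $\{\sin(\alpha_n^\ast x)\}$, together with the eigenvalues $\sigma_n^\ast = c-(\alpha_n^\ast)^2$ from (\ref{eigenvalues0}), is available. I would then expand the initial datum in this family: by hypothesis $y_0(x)=\sum_n a_n\sin(\alpha_n^\ast x)$ for suitable coefficients $a_n$, so that, by linearity and by the well-posedness (hence uniqueness) result of \cite{triggiani1980well}, the solution of (\ref{closed-pde}) corresponding to $\Theta^\ast$ is
\begin{equation*}
y(x,t;\Theta^\ast)=\sum_{n=1}^\infty a_n\,\mathrm{e}^{(c-(\alpha_n^\ast)^2)t}\sin(\alpha_n^\ast x),
\end{equation*}
which is exactly the form (\ref{ysolution0}) with $T_0A_n$ renamed $a_n$; this series satisfies the boundary conditions (\ref{closed-pdeifl})--(\ref{cpdefr11}) by virtue of (\ref{definition-inY})--(\ref{definition-inY1}), the PDE (\ref{closed-pdef}) termwise, and the initial condition (\ref{closed-pdeini}) by construction.

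Next I would exploit the ordering of the roots. Because $\alpha^\ast$ is, by assumption, the \emph{smallest} positive solution of (\ref{constraint2}) for $\Theta^\ast$, every $\alpha_n^\ast$ obeys $\alpha_n^\ast\geq\alpha^\ast$, whence
\begin{equation*}
\sigma_n^\ast=c-(\alpha_n^\ast)^2\leq c-(\alpha^\ast)^2\leq-\epsilon<0\qquad\text{for all }n,
\end{equation*}
where the middle inequality is precisely constraint (\ref{constraint1}) with $\alpha=\alpha^\ast$. Thus all eigenvalues are uniformly bounded above by $-\epsilon$, so each term in the series is multiplied by a factor $\mathrm{e}^{\sigma_n^\ast t}\leq\mathrm{e}^{-\epsilon t}$ tending to $0$ as $t\to\infty$. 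Bounding the $L^2(0,1)$-norm of $y(\cdot,t;\Theta^\ast)$ term by term — using $\|\sin(\alpha_n^\ast\cdot)\|_{L^2(0,1)}\leq1$ and controlling $\sum_n|a_n|$ (or the relevant $\ell^2$-type quantity) from the expansion of $y_0$ — I would obtain
\begin{equation*}
\|y(\cdot,t;\Theta^\ast)\|_{L^2(0,1)}\leq C\,\mathrm{e}^{-\epsilon t}\longrightarrow0\quad(t\to\infty),
\end{equation*}
which is the asserted closed-loop stability.

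The main technical obstacle lies in this last step, namely the convergence and term-by-term estimation of the eigenfunction series. The functions $\sin(\alpha_n^\ast x)$ come from a Sturm--Liouville problem with the \emph{nonlocal} boundary condition (\ref{definition-inY1}), so they need not be orthogonal in $L^2(0,1)$ and Parseval's identity is not directly applicable; justifying the interchange of summation and the limit $t\to\infty$ then requires either a Riesz-basis/biorthogonal-system property of $\{\sin(\alpha_n^\ast x)\}$ or, more simply, reading ``$\mathrm{span}$'' in the hypothesis as the set of \emph{finite} linear combinations. Under the latter reading $y(\cdot,t;\Theta^\ast)$ is a finite sum, the norm estimate above is immediate, and no summability issue arises; I expect this is the intended interpretation. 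All the remaining ingredients — the series representation, the eigenvalue formula $\sigma_n=c-\alpha_n^2$, and the uniform spectral gap — follow directly from the construction preceding the theorem and from the constraints defining Problem~P$_2$.
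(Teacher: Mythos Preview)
Your proposal is correct and follows essentially the same route as the paper: build the series solution (\ref{ysolution0}) from the span hypothesis, invoke uniqueness, and use the minimality of $\alpha^\ast$ together with constraint (\ref{constraint1}) to conclude $c-(\alpha_n^\ast)^2\leq c-(\alpha^\ast)^2\leq-\epsilon<0$ for every $n$. In fact you are more careful than the paper, which stops at ``all eigenvalues are negative'' without ever addressing the $L^2$ estimate or the non-orthogonality/summability issue you flag; the paper implicitly relies on exactly the finite-linear-combination reading of ``span'' that you identify.
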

\begin{proof}
~Because of constraint (\ref{constraint0}), the solution ~form ~(\ref{ysolution0}) ~with $\alpha_n=\alpha_n^\ast$ ~is ~guaranteed ~to ~satisfy ~(\ref{closed-pdef}), (\ref{closed-pdeifl}) and (\ref{cpdefr11}). If $y_0(x)\in \mathrm{span}\{\sin(\alpha_n^\ast x)\}$, then there exists  constants $Y_n, n\geq 1$, such that
\begin{equation*}
\begin{aligned}
   y_0(x)&=\sum_{n=1}^\infty Y_n \sin(\alpha^\ast_n x).
\end{aligned}
\end{equation*}
Taking $Y_n=T_0A_n$ ensures that (\ref{ysolution0})  with $\alpha_n=\alpha_n^\ast$ also satisfies the initial conditions (\ref{closed-pdeini}), and is therefore the unique solution of (\ref{closed-pde}). Since $\alpha^\ast$ is the first positive solution of equation (\ref{constraint2}), it follows from constraint (\ref{constraint1}) that for each $n \geq 1$,
\begin{equation*}
  c-(\alpha_n^\ast)^2\leq c-(\alpha^\ast)^2\leq -\epsilon <0.
\end{equation*}
This shows that all eigenvalues are negative.
\end{proof}
Theorem \ref{theorem0} requires that the initial function $y_0(x)$ be contained within the linear span of sinusoidal  functions $\sin(\alpha_n^\ast x)$, where each
$\alpha_n^\ast$ is a solution of equation (\ref{change-lambda-theda1-theda2})  corresponding to $\Theta^\ast$. The good thing about this condition is that it can be verified numerically by solving the following optimization problem:
choose span coefficients $Y_n, 1\leq n \leq N$, to minimize
\be \label{cosfunctionJ}
 J=\int_0^1\bigg|y_0(x)-\sum_{n=1}^N Y_n\sin(\alpha_n^\ast x)\bigg|^2 dx,
\ee
where $N$ is a sufficiently large integer and each $\alpha_n^\ast$ is a solution of equation (\ref{change-lambda-theda1-theda2}) corresponding to the optimal solution of Problem P$_2$. If the optimal cost value for this optimization problem is sufficiently small, then the span condition in Theorem \ref{theorem0} is likely to be satisfied, and therefore  closed-loop stability is guaranteed.

Based on our computational experience, the span condition in Theorem \ref{theorem0} is usually satisfied. This can be explained as follows. In the proof of Lemma \ref{lemma1} (see the appendix), we show that for any $\epsilon\in (0,\frac{1}{2}\pi)$,  there exists at least one solution of (\ref{change-lambda-theda1-theda2}) in the interval $[k\pi-\epsilon,k\pi+\epsilon]$ when $k$ is sufficiently large. It follows that $k\pi$ is an approximate solution of (\ref{change-lambda-theda1-theda2}) for all sufficiently large $k$---in a sense, the solutions $\alpha_n^\ast$ of (\ref{change-lambda-theda1-theda2}) converge to the integer multiples of $\pi$. In our computational experience, this convergence occurs very rapidly. Thus, it is reasonable to expect that the linear span of $\{\sin(\alpha_n^\ast x)\}$ is ``approximately'' the same as the linear span of $\{\sin(n\pi x)\}$, which is known to be a basis for the space of continuous functions defined on $[0,1]$. 

\section{Gradient Computation} \label{sec:VariationalAnalysis}
Problem P$_2$ is an optimal parameter selection problem with decision parameters $\theta_1, \theta_2$  and $\alpha$. In principle, such problems can be solved as nonlinear optimization problems using the Sequential Quadratic Programming (SQP)
method or other nonlinear optimization methods. However, to do this, we need the gradients of the cost functional (\ref{optcostfun}) and the constraint functions (\ref{constraints}) with respect to the decision parameters.
The gradients of the constraint functions can be easily derived using elementary differentiation.
Define
\begin{equation*}
\begin{aligned}
&g_1(\Theta)=\theta_1^2+\theta_2^2+2\theta_1\theta_2-2\theta_1-4\theta_2,\nonumber \\
&g_2(\alpha)=c- \alpha^2, \nonumber \\
&g_3(\Theta,\alpha)=(\theta_1\alpha^2+\theta_2\alpha^2-2\theta_2)\cos\alpha+(\alpha^3-\theta_1\alpha-2\theta_2\alpha)\sin\alpha+2\theta_2. \nonumber
\end{aligned}
\end{equation*}
Then the corresponding constraint gradients  are given by
\begin{equation}\label{g0-dp-in0}
\begin{aligned}
\nabla_{\theta_1} g_1(\Theta)=2\theta_1+2\theta_2-2,\quad \nabla_{\theta_2}g_1(\Theta)=2\theta_2+2\theta_1-4,\quad
\nabla_\alpha g_2(\alpha)=-2\alpha,   
\end{aligned}
\end{equation}
and
\begin{subequations}\label{g0-dp-eq}
\begin{align}
\nabla_{\theta_1} g_3(\Theta,\alpha)&=\alpha^2\cos\alpha-\alpha\sin\alpha,\\ 
\nabla_{\theta_2} g_3(\Theta,\alpha)&=(\alpha^2-2)\cos\alpha-2\alpha\sin\alpha+2,\\ 
\nabla_\alpha g_3(\Theta,\alpha)&=(\alpha^3+\theta_1\alpha)\cos\alpha+(3\alpha^2-\theta_1\alpha^2-\theta_2\alpha^2-\theta_1)\sin\alpha. 
\end{align}
\end{subequations}
Since the constraint functions in (\ref{constraints}) are explicit functions of the decision variables, their gradients are easily obtained. The cost functional (\ref{optcostfun}), on the other hand, is an implicit function of $\Theta$ because it depends on the state trajectory $y(x,t)$. Thus, computing the gradient of (\ref{optcostfun}) is a non-trivial task. We now develop a computational method, analogous to the costate method in the optimal control of ordinary differential equations~\cite{lin2013optimal, linsurvey2013, Teo1991}, for computing this gradient.

We define the following costate PDE system:
\begin{subequations}\label{costate}
\begin{numcases}{}
v_t(x,t) + v_{xx}(x,t) + cv(x,t)+y(x,t;\Theta)-k(x;\Theta) v_x(1,t) =0,\\
v(0,t)=v(1,t)=0,\label{costatebd} \\
v(x,T)=0. \label{costateinition}
\end{numcases}
\end{subequations}
Let $v(x,t;\Theta)$ denote the solution of the costate PDE system (\ref{costate}) corresponding to the parameter vector $\Theta$. Then we have the following theorem.
\begin{theorem}
The gradient of the cost functional (\ref{optcostfun}) is given by
\begin{subequations}\label{g0-dp-0}
\begin{align}
\nabla_{\theta_1} g_0(\Theta)
&=-\int_0^T\int_0^1 xv_x(1,t;\Theta)y(x,t;\Theta)dxdt+\frac{1}{3}\theta_1+\frac{1}{4}\theta_2,  \\ \label{gradient1}
\nabla_{\theta_2} g_0(\Theta)
&=-\int_0^T\int_0^1 x^2v_x(1,t;\Theta)y(x,t;\Theta)dxdt+\frac{1}{4}\theta_1+\frac{1}{5}\theta_2.
\end{align}
\end{subequations}
\end{theorem}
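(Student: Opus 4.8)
The plan is to use the classical costate/adjoint variational technique: perturb the parameter vector $\Theta$ by an increment $\delta\Theta=(\delta\theta_1,\delta\theta_2)^\top$, track the induced first-order variation $\delta y$ of the state, express the first variation $\delta g_0$ of the cost in terms of $\delta y$, and then eliminate $\delta y$ by introducing the costate PDE (\ref{costate}) as a multiplier. First I would linearize the closed-loop system (\ref{closed-pde}) around $y(x,t;\Theta)$: the variation $\delta y$ satisfies the same PDE $\delta y_t=\delta y_{xx}+c\,\delta y$ with $\delta y(x,0)=0$, $\delta y(0,t)=0$, and the perturbed boundary condition at $x=1$,
\[
\delta y(1,t)=\int_0^1 k(\xi;\Theta)\,\delta y(\xi,t)\,d\xi+\int_0^1\big(\xi\,\delta\theta_1+\xi^2\,\delta\theta_2\big)y(\xi,t;\Theta)\,d\xi,
\]
the second term coming from $\partial k/\partial\theta_i$. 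Simultaneously, differentiating (\ref{optcostfun}) gives
\[
\delta g_0=\int_0^T\!\!\int_0^1 y\,\delta y\,dx\,dt+\int_0^1 k(x;\Theta)\big(x\,\delta\theta_1+x^2\,\delta\theta_2\big)dx,
\]
and the last integral already produces the explicit terms $\tfrac13\theta_1+\tfrac14\theta_2$ and $\tfrac14\theta_1+\tfrac15\theta_2$ after substituting $k(x;\Theta)=\theta_1 x+\theta_2 x^2$ and integrating the monomials.

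The heart of the argument is to rewrite $\int_0^T\!\int_0^1 y\,\delta y\,dx\,dt$ using the costate $v$. Multiply the $\delta y$-equation by $v$ and integrate over $[0,1]\times[0,T]$; then integrate by parts in $t$ once and in $x$ twice. The $t$-integration uses $v(x,T)=0$ and $\delta y(x,0)=0$ so the time-boundary terms vanish; the $x$-integration by parts generates boundary terms at $x=0$ and $x=1$ involving $v$, $v_x$, $\delta y$, $\delta y_x$. Using $v(0,t)=v(1,t)=0$ and $\delta y(0,t)=0$ kills all of these except the single term $-\int_0^T v_x(1,t)\,\delta y(1,t)\,dt$. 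Collecting everything, the interior contributions combine into $\int_0^T\!\int_0^1\big(v_t+v_{xx}+cv\big)\delta y\,dx\,dt$, which by the costate equation (\ref{costate}) equals $-\int_0^T\!\int_0^1\big(y-k(x;\Theta)v_x(1,t)\big)\delta y\,dx\,dt$. Rearranging yields
\[
\int_0^T\!\!\int_0^1 y\,\delta y\,dx\,dt=-\int_0^T v_x(1,t)\,\delta y(1,t)\,dt+\int_0^T\!\!\int_0^1 k(x;\Theta)v_x(1,t)\,\delta y(x,t)\,dx\,dt.
\]

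Now substitute the perturbed boundary condition for $\delta y(1,t)$ into the first term on the right. This produces $-\int_0^T v_x(1,t)\int_0^1 k(\xi;\Theta)\delta y(\xi,t)\,d\xi\,dt$, which exactly cancels the second (volume) term above, plus the genuinely new contribution $-\int_0^T v_x(1,t)\int_0^1(\xi\,\delta\theta_1+\xi^2\,\delta\theta_2)y(\xi,t;\Theta)\,d\xi\,dt$. Hence $\int_0^T\!\int_0^1 y\,\delta y\,dx\,dt$ equals this last expression, and comparing the coefficients of $\delta\theta_1$ and $\delta\theta_2$ in $\delta g_0$ gives precisely (\ref{g0-dp-0}). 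The main obstacle I anticipate is purely technical rather than conceptual: justifying the integrations by parts and the Fubini interchanges rigorously requires knowing enough regularity of $y(\cdot,\cdot;\Theta)$ and $v(\cdot,\cdot;\Theta)$ (in particular that the traces $y_x(1,t)$, $v_x(1,t)$ make sense and are integrable in $t$), and confirming Gâteaux-differentiability of $\Theta\mapsto y(\cdot,\cdot;\Theta)$ so that $\delta y$ is well defined; the well-posedness result of \cite{triggiani1980well} cited earlier, together with standard parabolic smoothing for $t>0$, should supply the needed regularity, and the boundary-term bookkeeping at $x=1$ is the one place where care is needed to get every cancellation right.
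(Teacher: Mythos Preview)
Your proposal is correct and follows essentially the same adjoint/costate approach as the paper. The only difference is cosmetic: the paper first augments $g_0$ with the PDE residual times a multiplier $\nu$, integrates by parts, and then perturbs $\Theta\to\Theta+\varepsilon\rho$, whereas you first derive the sensitivity PDE for $\delta y$ and then test it against $v$; in both cases the costate equation is chosen precisely so that the terms containing the state variation cancel, leaving only the $-\int_0^T\!\int_0^1 v_x(1,t)\,\nabla_\Theta k(x;\Theta)\,y\,dx\,dt$ contribution plus the explicit kernel term.
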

\begin{proof}
Let $\nu(x,t)$ be an arbitrary function satisfying
\begin{equation}\label{setwfun}
\begin{aligned}
&\nu(x,T)=0,\quad \nu(0,t)=\nu(1,t)=0.
\end{aligned}
\end{equation}
Then we can rewrite the cost functional (\ref{optcostfun}) in augmented form as follows:
\begin{align}\label{g0}
g_0(\Theta)&=\frac{1}{2}\int_0^T\int_0^1 y^2(x,t;\Theta)dxdt+\frac{1}{2}\int_0^1k^2(x;\Theta)dx \nonumber\\
&\quad+\int_0^T\int_0^1\nu(x,t)\big\{-y_t(x,t;\Theta)+y_{xx}(x,t;\Theta)+c y(x,t;\Theta)\big\}dxdt.
\end{align}
Using integration by parts and applying the boundary condition (\ref{closed-pdeifl}), we can simplify the augmented cost functional (\ref{g0}) to obtain

\begin{align}\label{g0inpart}
g_0(\Theta)
&=\frac{1}{2}\int_0^T\int_0^1 y^2(x,t;\Theta)dxdt+\frac{1}{2}\int_0^1k^2(x;\Theta)dx \nonumber\\
&\quad-\int_0^1 \nu(x,T)y(x,T;\Theta)dx + \int_0^1 \nu(x,0)y(x,0)dx \nonumber\\
&\quad+\int_0^T\int_0^1 \nu_t(x,t) y(x,t;\Theta)dxdt+\int_0^T \big [\nu(x,t)y_x(x,t;\Theta)\big]_{x=0}^{x=1}dt \nonumber \\
&\quad-\int_0^T \nu_x(1,t)y(1,t;\Theta)dt+\int_0^T\int_0^1 \nu_{xx}(x,t) y(x,t;\Theta)dxdt \nonumber \\
&\quad+c \int_0^T\int_0^1\nu(x,t) y(x,t;\Theta)dxdt.
\end{align}
Thus, recalling the conditions (\ref{closed-pdeini}) and (\ref{setwfun}), we obtain
\begin{align}
\label{g0-1}
g_0(\Theta)&=\frac{1}{2}\int_0^T\int_0^1 \nonumber y^2(x,t;\Theta)dxdt+\frac{1}{2}\int_0^1k^2(x;\Theta)dx\\ \nonumber
&\quad+\int_0^T\int_0^1\big\{\nu_t(x,t) + \nu_{xx}(x,t) + c\nu(x,t)\big\}y(x,t;\Theta)dxdt \nonumber\\
&\quad+\int_0^1\nu(x,0)y_0(x)dx-\int_0^T\nu_x(1,t)y(1,t;\Theta)dt.\nonumber
\end{align}
Now, consider a perturbation $\varepsilon\rho$ in the parameter vector $\Theta$, where $\varepsilon$ is a constant of sufficiently small magnitude and $\rho$ is an arbitrary vector. The corresponding perturbation in the state is,
\be
\begin{aligned}\label{perturbY}
y(x,t;\Theta+\varepsilon\rho) = y(x,t;\Theta) +\varepsilon\langle \nabla_{\Theta} y(x,t;\Theta), \rho\rangle +\mathcal O(\varepsilon^2),
\end{aligned}
\ee
and the perturbation in the feedback kernel is,
\be
\begin{aligned}\label{perturbKernel}
k(x;\Theta+\varepsilon\rho) = k(x;\Theta) +\varepsilon\langle \nabla_{\Theta} k(x;\Theta), \rho\rangle +\mathcal O(\varepsilon^2),
\end{aligned}
\ee
where $\mathcal O(\varepsilon^2)$ denotes omitted second-order terms such that $\mathcal \varepsilon^{-1}\mathcal O(\varepsilon^2)\rightarrow 0$ as $\varepsilon \rightarrow 0$.
For notational simplicity, we define $\eta(x,t) = \langle \nabla_{\Theta} y(x,t;\Theta), \rho\rangle$. 
Obviously, $\eta(x,0)=0$, because the initial profile $y_0(x)$ is independent of the parameter vector $\Theta$.
Based on (\ref{perturbY}) and (\ref{perturbKernel}), the perturbed augmented cost functional takes the following form:
\begin{align}
\label{g0-p}
g_0(\Theta+\varepsilon\rho)&=\frac{1}{2}\int_0^T\int_0^1 \big\{y(x,t;\Theta)+\varepsilon\eta(x,t)\big\}^2dxdt\nonumber\\
&\quad+\int_0^T\int_0^1 \big\{\nu_t(x,t) + \nu_{xx}(x,t) + c\nu(x,t)\big\}\big\{y(x,t;\Theta)+\varepsilon\eta(x,t)\big\}dxdt\nonumber\\
&\quad+\int_0^1\nu(x,0)y_0(x)dx-\int_0^T\nu_x(1,t)\big\{y(1,t;\Theta)+\varepsilon\eta(1,t)\big\}dt\nonumber\\
&\quad+\frac{1}{2}\int_0^1\big\{k(x;\Theta) +\varepsilon\langle \nabla_{\Theta} k(x;\Theta), \rho\rangle\big\}^2dx+\mathcal O(\varepsilon^2).
\end{align}
From the boundary condition in (\ref{cpdefr11}), we have
\begin{align}
\label{g0-bound}
y(1,t;\Theta)+\varepsilon\eta(1,t)&=\int_0^1k(x;\Theta)\big\{y(x,t;\Theta)+\varepsilon\eta(x,t)\big\}dx \nonumber\\
&\quad+\int_0^1\varepsilon\langle\nabla_\Theta k(x;\Theta),\rho\rangle y(x,t;\Theta) dx+\mathcal O(\varepsilon^2).
\end{align}
Substituting (\ref{g0-bound}) into (\ref{g0-p}) gives
\begin{align}
\label{g0-p1}
g_0(\Theta+\varepsilon\rho)&=\frac{1}{2}\int_0^T\int_0^1 \big\{y(x,t;\Theta)+\varepsilon\eta(x,t)\big\}^2dxdt\nonumber\\
&\quad+\int_0^T\int_0^1 \big\{\nu_t(x,t) + \nu_{xx}(x,t) + c\nu(x,t)\big\}\big\{y(x,t;\Theta)+\varepsilon\eta(x,t)\big\}dxdt\nonumber\\
&\quad+\int_0^1\nu(x,0)y_0(x)dx-\int_0^T\nu_x(1,t)\left[\int_0^1k(x;\Theta)\big\{y(x,t;\Theta)+\varepsilon\eta(x,t)\big\}dx\right]dt \nonumber \\
&\quad-\int_0^T\nu_x(1,t)\left[\int_0^1\varepsilon\langle\nabla_\Theta k(x;\Theta),\rho\rangle y(x,t;\Theta)dx\right]dt  \nonumber\\
&\quad+\frac{1}{2}\int_0^1\big\{k(x;\Theta) +\varepsilon\langle \nabla_{\Theta} k(x;\Theta), \rho\rangle\big\}^2dx+\mathcal O(\varepsilon^2).
\end{align}
Taking the derivative of (\ref{g0-p1}) with respect to $\varepsilon$ and setting $\varepsilon=0$ gives

\begin{align}
\label{g0-dp}
\langle\nabla_\Theta g_0(\Theta), \rho\rangle&=\left.\frac{d g_0(\Theta+\varepsilon\rho)}{d\varepsilon}\right|_{\varepsilon=0} \nonumber \\
&=\int_0^T\int_0^1 \big\{y(x,t;\Theta)+\nu_t(x,t) + \nu_{xx}(x,t) + c\nu(x,t)\big\}\eta(x,t) dxdt \nonumber\\
&\quad-\int_0^T\int_0^1\nu_x(1,t)k(x;\Theta) \eta(x,t)dxdt \nonumber\\
&\quad-\int_0^T\int_0^1\nu_x(1,t)\langle\nabla_\Theta k(x;\Theta),\rho\rangle y(x,t;\Theta) dxdt \nonumber\\
&\quad+\int_0^1k(x;\Theta)\langle \nabla_{\Theta} k(x;\Theta), \rho\rangle dx. 
\end{align}
Choosing the multiplier $\nu(x,t)$ to be the solution of the costate system
(\ref{costate}),
the gradient in (\ref{g0-dp}) becomes
\be
\begin{aligned}\nonumber
\langle\nabla_\Theta g_0(\Theta), \rho\rangle
=&-\int_0^T\int_0^1v_x(1,t;\Theta)\langle\nabla_\Theta k(x;\Theta),\rho\rangle y(x,t;\Theta) dxdt\\
&+\int_0^1k(x;\Theta)\langle \nabla_{\Theta} k(x;\Theta), \rho\rangle dx .
\end{aligned}
\ee
Taking $\rho=(1,0)^\top$ gives
\begin{equation*}
\nabla_{\theta_1} g_0(\Theta)=-\int_0^T\int_0^1 xv_x(1,t;\Theta)y(x,t;\Theta)dxdt+\frac{1}{3}\theta_1+\frac{1}{4}\theta_2.
\end{equation*}
Similarly, taking $\rho=(0,1)^\top$ gives
\begin{equation*}
\nabla_{\theta_2} g_0(\Theta)=-\int_0^T\int_0^1 x^2v_x(1,t;\Theta)y(x,t;\Theta)dxdt+\frac{1}{4}\theta_1+\frac{1}{5}\theta_2.
\end{equation*}
This completes the proof.
\end{proof}
\section{Numerical Solution Procedure}\label{sec:NumericalAlgorithms}
Based on the gradient formulas derived in Section \ref{sec:VariationalAnalysis}, we now propose a gradient-based optimization framework for solving Problem P$_2$. This framework is illustrated in Figure \ref{fig:process} and described in detail below.
\begin{algorithm}\label{Algorithm2}
Gradient-based optimization procedure for solving Problem P$_2$.
\textsf{\begin{enumerate}
  \item Choose an initial guess $(\theta_{1}, \theta_{2}, \alpha)$.
  \item Solve the state PDE system (\ref{closed-pde}) corresponding to $(\theta_{1}, \theta_{2})$. \label {step2}
  \item Solve the costate PDE system (\ref{costate}) corresponding to $(\theta_{1}, \theta_{2})$.
  \item Compute the cost and constraint gradients at $(\theta_{1}, \theta_{2}, \alpha)$ using (\ref{g0-dp-in0}), (\ref{g0-dp-eq}), and (\ref{g0-dp-0}).
  \item Use the gradient information obtained in Step (d) to perform an optimality test. If $(\theta_{1}, \theta_{2}, \alpha)$ is optimal, then stop; otherwise, go
to Step (f). \label{performtest}
  \item Use the gradient information obtained in Step (d) to calculate a search direction. \label{step5}
  \item Perform a line search to determine the optimal step length.
  \item Compute a new point $(\theta_{1}, \theta_{2}, \alpha)$ and return to Step (b). \label{endtest}
\end{enumerate}}
\end{algorithm}
Note that Steps (e)-(h) of Algorithm \ref{Algorithm2} can be performed automatically by standard nonlinear optimization solvers
such as FMINCON in MATLAB.

Recall from Theorem \ref{theorem0} that to guarantee closed-loop stability, the optimal value of $\alpha$ must be the \emph{first} positive solution of (\ref{change-lambda-theda1-theda2}). In practice, this can usually be achieved by choosing $\alpha=0$ as the initial guess. Moreover, after solving Problem P$_2$, it is easy to check whether the optimal value of $\alpha$ is indeed the smallest positive solution by plotting the left-hand side of (\ref{change-lambda-theda1-theda2}).
\begin{figure}
\begin{center}
\includegraphics[width=4in]{./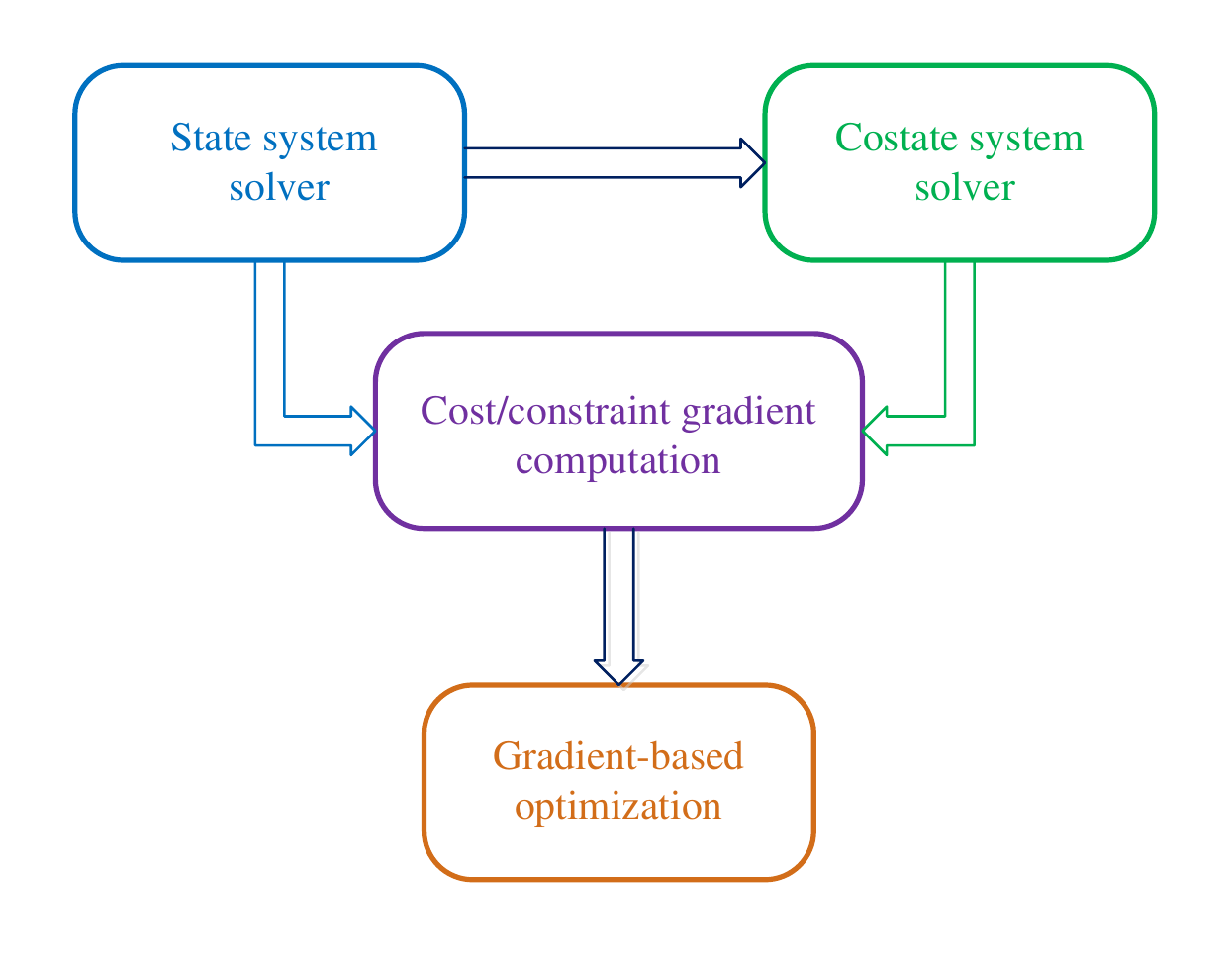}
\caption{Gradient-based optimization framework for solving Problem P$_2$.}\label{fig:process}
\end{center}
\end{figure}

\subsection{Simulation of the state system}\label{SecClosed}
To solve the state system (\ref{closed-pde}) numerically, we will develop a finite-difference method. This method involves discretizing both the spatial and the temporal domains into a finite number of subintervals, i.e.,
\begin{subequations}
\begin{align}
&x_0 = 0,\, x_1 = h,\, x_2 = 2h,\ldots,x_n=nh = 1,\\
&t_0 = 0,\, t_1 = \tau,\,t_2 = 2\tau,\ldots,t_m = m\tau=T,
\end{align}
\end{subequations}
where $n$ and $m$ are positive integers and $h=1/n$ and $\tau=T/m$.
Using the Taylor expansion, we obtain the following approximations:
\begin{subequations}\label{discretize_y}
\begin{align}
&\frac{\partial y(x_i,t_j)}{\partial t}=\frac{y(x_i,t_j+\tau)-y(x_i,t_j)}{\tau}+\mathcal O(\tau),\\
&\frac{\partial^2 y(x_i,t_j)}{\partial x^2}=\frac{y(x_i+h,t_j)-2y(x_i,t_j)+y(x_i-h,t_j)}{h^2}+\mathcal O(h^2),
\end{align}
\end{subequations}
where $\mathcal O(\tau)$ and $\mathcal O(h^2)$ denote, respectively, omitted first- and second-order terms such that $\mathcal O(\tau)\rightarrow 0$ as $\tau \rightarrow 0$ and $h^{-1} \mathcal O(h^2)\rightarrow 0$ as $h \rightarrow 0$. Substituting (\ref{discretize_y}) into (\ref{closed-pdef}) gives

\begin{equation}\label{discretize_y0}
\begin{aligned}
\frac{y_{i,j+1}-y_{i,j}}{\tau}=\frac{y_{i+1,j}-2y_{i,j}+y_{i-1,j}}{h^2}+cy_{i,j},
\end{aligned}
\end{equation}
where $y_{i,j}=y(x_i,t_j)$, $i=0,1,\ldots,n$, $j=0,1,\ldots,m$. Simplifying this equation, we obtain
\begin{equation}\label{discretize_y1}
\begin{aligned}
y_{i,j+1}=(1-2r+c\tau)y_{i,j}+r(y_{i-1,j}+y_{i+1,j}),
\end{aligned}
\end{equation}
where  $1\leq i \leq n-1$, $0\leq j \leq m-1$ and
\begin{equation}\label{discretize_r}
r=\frac{\tau}{h^2}.
\end{equation}
The explicit numerical scheme (\ref{discretize_y1}) is convergent when $0< r \leq 0.5$ (see reference \cite{burden1993} for the relevant convergence analysis). Thus, in this paper, we assume that $\tau$ and $h$ are chosen such that $0< r \leq 0.5$. From (\ref{closed-pdeini}), we obtain the initial condition
\begin{equation}\label{discretize_yini}
y_{i,0}=y(x_i,0)=y_0(x_i), ~~i=0, 1, \ldots, n.
\end{equation}
Moreover, from (\ref{closed-pdeifl}) and (\ref{cpdefr11}), we obtain the boundary conditions
\begin{equation}\label{discretize_ybd0}
y_{0,j}=y(0,t_j)=0,~~j=1,2,\ldots,m,
\end{equation}
and
\begin{equation}\label{y1tdic}
y_{n,j}=y(1,t_j)=\int_0^1 k(\xi;\Theta)y(\xi,t_j)d\xi, ~~j=1,2,\dots,m.
\end{equation}
Using the composite trapezoidal rule \cite{burden1993}, the integral in (\ref{y1tdic}) becomes

\begin{align}\label{discretize_ybd1}
y_{n,j}
&=\frac{1}{2}h\big\{k(x_0;\Theta)y(x_0,t_j)+k(x_n;\Theta)y(x_n,t_j)\big\}+h\sum_{i=1}^{n-1}k(x_i;\Theta)y(x_i,t_j)\nonumber\\
&=\frac{1}{2}hk(x_n;\Theta)y_{n,j}+h\sum_{i=1}^{n-1}k(x_i;\Theta)y_{i,j}.
\end{align}
Rearranging this equation yields
\begin{equation}\label{discretize_ybd11}
\begin{aligned}
y_{n,j}
&=\left[1-\frac{1}{2}hk(x_n;\Theta)\right]^{-1}\left[h\sum_{i=1}^{n-1}k(x_i;\Theta)y_{i,j}\right].
\end{aligned}
\end{equation}
By using the initial condition (\ref{discretize_yini}) and the boundary conditions (\ref{discretize_ybd0}) and (\ref{discretize_ybd11}),
numerical approximations of $y(x,t)$ at the pre-defined nodes can be calculated forward in time recursively from (\ref{discretize_y1}).

\subsection{Simulation of the costate system }\label{SecCos}

As with the state system, we will use the finite-difference method to solve the costate system (\ref{costate}) numerically. Using the Taylor expansion, we obtain the following approximations:
\begin{subequations}\label{discretize_v}
\begin{align}
\frac{\partial v(x_i,t_j)}{\partial t} &= \frac{v(x_i,t_j)-v(x_i,t_j-\tau)}{\tau}+\mathcal O(\tau),\\
\frac{\partial^2 v(x_i,t_j)}{\partial x^2} &= \frac{v(x_i+h,t_j)-2v(x_i,t_j)+v(x_i-h,t_j)}{h^2}+\mathcal O(h^2),\\
\frac{\partial v(1,t_j)}{\partial x} &= \frac{v(x_n,t_j)-v(x_n-h,t_j)}{h}+\mathcal O(h).
\end{align}
\end{subequations}
Substituting (\ref{discretize_v}) into (\ref{costate}) gives
\begin{equation}\label{discretize_v0}
\begin{aligned}
\frac{v_{i,j}-v_{i,j-1}}{\tau}+\frac{v_{i+1,j}-2v_{i,j}+v_{i-1,j}}{h^2}+cv_{i,j}+y_{i,j}-k(x_i;\Theta)\frac{v_{n,j}-v_{n-1,j}}{h}=0,
\end{aligned}
\end{equation}
where $v_{i,j}=v(x_i,t_j)$.
We rearrange this equation to obtain
\begin{equation}\label{discretize_v1}
\begin{aligned}
v_{i,j-1}&=(1-2r+c\tau)v_{i,j}+r(v_{i+1,j}+v_{i-1,j})+\tau y_{i,j}-\frac{\tau k(x_i;\Theta)}{h}(v_{n,j}-v_{n-1,j}),
\end{aligned}
\end{equation}
where $1\leq i \leq n-1$, $1\leq j \leq m$ and $r$ is as defined in (\ref{discretize_r}). From (\ref{costateinition}), we obtain the terminal condition
\begin{equation}\label{discretize_vini}
v_{i,m}=v(x_i,T)=0,~~i=0,1,\dots,n.
\end{equation}
Moreover, from (\ref{costatebd}), we obtain the boundary conditions
\begin{equation}\label{discretize_vbd}
v_{0,j}=v(0,t_j)=0,\ \ v_{n,j}=v(1,t_j)=0,~~j=0,1,\dots,m.
\end{equation}
Using the recurrence equation (\ref{discretize_v1}), together with (\ref{discretize_vini}) and (\ref{discretize_vbd}), we can compute approximate values of $v(x,t)$ backward in time. The finite-difference schemes for solving the state and costate PDEs are summarized in Table \ref{tab:test}.
\begin{table}
\begin{center}
 \caption{\label{tab:test}Numerical computation of  $y(x,t)$ and $v(x,t)$}
 \begin{tabular}{lcl}
  \toprule
   \textbf{Procedure 1}. Evaluation of $y(x_i,t_j)$.\\
  \midrule
\textbf{1}: Set $1/n\rightarrow h$, $T/m\rightarrow\tau$.\\
\textbf{2}: Compute $y_{i,0}$ for each $i=0,1,\dots,n$ using (\ref{discretize_yini}).\\ 
\textbf{3}: Set $1\rightarrow j$.\\
\textbf{4}: Compute $y_{i,j}$ for each $i=1,2,\dots,n-1$ by solving (\ref{discretize_y1}).\\
\textbf{5}: Compute $y_{0,j}$ using (\ref{discretize_ybd0}).\\
\textbf{6}: Compute $y_{n,j}$ using (\ref{discretize_ybd11}).\\
\textbf{7}: If $j=m$, then stop. Otherwise, set $j+1\rightarrow j$ and go to Step 4.\\
\toprule
  \textbf{Procedure 2}. Evaluation of $v(x_i,t_j).$\\
\midrule
\textbf{1}: Compute $v_{i,m}$ for each $i=0,1,\dots,n$ using (\ref{discretize_vini}).\\
\textbf{2}: Set $j=m-1$.\\ 
\textbf{3}: Compute $v_{i,j}$ for each  $i=1,2,\dots,n-1$ using (\ref{discretize_v1}).\\
\textbf{4}: Compute $v_{0,j}$ and $v_{n,j}$ using (\ref{discretize_vbd}).\\
\textbf{5}: If $j=0$, then stop. Otherwise, set $j-1\rightarrow j$ and go to Step 3.\\
  \bottomrule
 \end{tabular}
 \end{center}
\end{table}

\subsection{Numerical integration}
Recall the cost functional (\ref{optcostfun}):
\begin{equation*}
\begin{aligned}
g_0(\Theta)&=\frac{1}{2}\int_0^T\int_0^1 y^2(x,t;\Theta)dxdt+\frac{1}{2}\int_0^1k^2(x;\Theta)dx\\
&=\frac{1}{2}\int_0^T\int_0^1 y^2(x,t;\Theta)dxdt+\frac{1}{6}\theta_1^2+\frac{1}{10}\theta_2^2+\frac{1}{4}\theta_1\theta_2.
\end{aligned}
\end{equation*}
Furthermore, recall the cost functional's gradient from (\ref{g0-dp-0}):
\begin{equation*}
\begin{aligned}
\nabla_{\theta_1} g_0(\Theta)
&=-\int_0^T\int_0^1 xv_x(1,t;\Theta)y(x,t;\Theta)dxdt+\frac{1}{3}\theta_1+\frac{1}{4}\theta_2,  \\
\nabla_{\theta_2} g_0(\Theta)
&=-\int_0^T\int_0^1 x^2v_x(1,t;\Theta)y(x,t;\Theta)dxdt+\frac{1}{4}\theta_1+\frac{1}{5}\theta_2.
\end{aligned}
\end{equation*}
Clearly, both the cost functional (\ref{optcostfun}) and its gradient (\ref{g0-dp-0}) involve evaluating double integrals of the form
\begin{equation}\label{defindbinter}
\int_0^T\int_0^1\psi(x,t)dxdt,
\end{equation}
where $\psi(x,t)=y^2(x,t;\Theta)$ for the cost functional and $\psi(x,t)=-\nabla_{\theta_i} k(x;\Theta)v_x(1,t;\Theta)y(x,t;\Theta)$, $i=1,2$, for the cost functional's  gradient. To evaluate these integrals, we partition the space and temporal domains using the same equally-spaced mesh points $x_0, x_1, \ldots, x_n$ and $t_0, t_1, \ldots, t_m$ as in Sections \ref{SecClosed} and \ref{SecCos}.  These subintervals define step sizes $h=1/n$ and $\tau=T/m$. The integral in (\ref{defindbinter}) can be written as the following iterated integral:
\begin{equation}\label{defindbinter1}
\int_0^T\int_0^1\psi(x,t)dxdt=\int_0^T\left(\int_0^1\psi(x,t)dx\right)dt.
\end{equation}
Applying the composite Simpson's rule~\cite{burden1993} twice, we obtain the following approximation:
\begin{align}\label{dbintergrad}
 \int_0^T\int_0^1\psi(x,t)dxdt &= \frac{1}{3}\tau \bigg\{\phi(t_0)+2\sum\limits_{l = 1}^{(m/2) - 1}\phi(t_{2l})+4\sum\limits_{l = 1}^{m/2}\phi(t_{2l-1})+\phi(t_m)\bigg\},
\end{align}
where
\begin{equation*}
  \phi(t_j)=\frac{1}{3}h\bigg[\psi(x_0,t_j ) + 2\sum\limits_{k = 1}^{(n/2) - 1} {\psi(x_{2k} ,t_j )} +{4\sum\limits_{k = 1}^{n/2} {\psi(x_{2k - 1} ,t_j ) + \psi(x_n ,t_j )}}\bigg].
\end{equation*}
More details on numerical integration algorithms are available in \cite{burden1993}.
Using (\ref{dbintergrad}), the cost functional (\ref{optcostfun}) and its gradient (\ref{g0-dp-0}) can be evaluated successfully.

\section{Numerical Simulations}\label{sec:Numerical Simulations}
Our numerical simulations were conducted within the MATLAB programming environment running on a desktop computer with the following configuration: Intel Core i7-2600 3.40GHz CPU, 4.00GB RAM, 64-bit Windows 7 Operating System. For the finite-difference discretization, we used $n=14$ spatial intervals and $m=5000$ temporal intervals over a time horizon of $[0,T]=[0,4]$. Our code implements the gradient-based optimization procedure in Algorithm \ref{Algorithm2} by combining the FMINCON function in MATLAB with the gradient computation method described in Section \ref{sec:VariationalAnalysis}.

Consider the uncontrolled version of (\ref{closed-pde}) in which $u(t)=0$. In this case, the exact solution is
\begin{equation}\label{yexactsolution}
y(x,t)=2\sum_{n=1}^\infty C_n\mathrm{e}^{(c-n^2\pi^2)t}\sin(n \pi x)dx,
\end{equation}
where $C_n$ are the Fourier coefficients defined by
\begin{equation*}
C_n=\int_0^1y_0(x)\sin(n \pi x)dx.
\end{equation*}
The eigenvalues of (\ref{yexactsolution}) are $c-n^2\pi^2$, $n=1,2,\ldots$ The largest eigenvalue is therefore $c-\pi^2$, which indicates that system (\ref{closed-pde}) with $u(t)=0$ is unstable for $c>\pi^2\approx 9.8696$. We report the numerical results from our algorithm for three different scenarios.
\subsection{Scenario 1}
\begin{table}
\begin{center}
\caption{Solutions of (\ref{change-lambda-theda1-theda2}) and corresponding optimal span coefficients $Y_n$ in (\ref{cosfunctionJ}) for Scenario 1.}\label{tab:alpha10}
\begin{tabular}{cccc}
\toprule
$n$ &$\alpha^\ast_n$  &$\alpha^\ast_n/\pi$  & $Y_n$\\
\toprule
1&  3.3486   &1.0658  & 1.0364 \\
2&  6.3838   &2.0320  &$-0.0915$  \\
3&  9.4952   &3.0224  &0.0505   \\
4&  12.6173  &4.0162  &$-0.0360$  \\
5&  15.7493  &5.0131  & 0.0268  \\
6&  18.8835  &6.0108  &$-0.0206$  \\
7&  22.0205  &7.0093  & 0.0161  \\
8&  25.1582  &8.0081  &$-0.0126$  \\
9&  28.2971  &9.0072  & 0.0098  \\
10& 31.4363  &10.0064 &$-0.0074$  \\
\toprule
\end{tabular}
\end{center}
\end{table}
For ~the ~first ~scenario, ~we ~choose ~$c=10$ ~and ~$y_0(x)=\sin(\pi x)$. ~~The  corresponding uncontrolled ~open-loop ~response (see equation (\ref{yexactsolution})) is shown in Figure~\ref{fig:Original U10}.  As we can see from Figure~\ref{fig:Original U10}, the state of the uncontrolled system grows as time increases. For the feedback kernel optimization, we suppose that the lower and upper bounds for the optimization parameters are $a_i=-10$ and $b_i=10$, respectively. We also choose $\epsilon=1$ in (\ref{constraint1}).
Starting from the initial guess $(\theta_{1},\theta_{2}, \alpha)=(-1.0, 2.0, 0)$, our program terminates after 23 iterations and 15.8358 seconds. The optimal cost ~value ~is $g_0=0.1712$ and ~the ~optimal solution of ~Problem P$_2$ is $(\theta^\ast_{1},\theta^\ast_{2}, \alpha^\ast)=(-1.0775, 0.5966, 3.3486)$. 

The spatial-temporal response of the controlled plant corresponding to $(\theta_1^\ast, \theta_2^\ast)$ is shown in
Figure~5.2(a).
The figure clearly shows that the controlled system (\ref{closed-pde}) with optimized parameters $(\theta^\ast_1, \theta^\ast_2)$ is stable.  The corresponding boundary control and kernel function are shown in Figures~5.2(b) and 5.2(c), respectively. 

Recall from Theorem \ref{theorem0} that closed-loop stability is guaranteed if $\alpha^\ast=3.3486$ is the first positive solution of equation (\ref{change-lambda-theda1-theda2}) and the initial function $y_0(x)$ is contained within the linear span of $\{\sin(\alpha_n^\ast x)\}$, where each $\alpha_n^\ast$ is a solution of equation (\ref{change-lambda-theda1-theda2}) corresponding to $(\theta_1^\ast,\theta_2^\ast)$. It is clear from Figure~5.2(d) that $\alpha^\ast$ is indeed the first positive solution of equation (\ref{change-lambda-theda1-theda2}). To verify the linear span condition, we use FMINCON in MATLAB to minimize (\ref{cosfunctionJ}) for $N=10$.
The first 10 positive solutions of (\ref{change-lambda-theda1-theda2}) corresponding to the optimal parameters
$\theta_1^\ast=-1.0775$ and $\theta_2^\ast=0.5966$ are given in Table \ref{tab:alpha10}. The optimal span coefficients that minimize (\ref{cosfunctionJ}) are also given.  ~~The optimal value of $J$ in (\ref{cosfunctionJ}) is $2.184832\times 10^{-5}$, which indicates that the span condition holds. ~~Note ~also ~from ~Table \ref{tab:alpha10} that $\alpha_n^\ast/\pi$ converges to an integer as $n \rightarrow \infty$ (recall the discussion of the end of Section 2.2).



\begin{figure}
\begin{center}
\includegraphics[width=3.8in]{./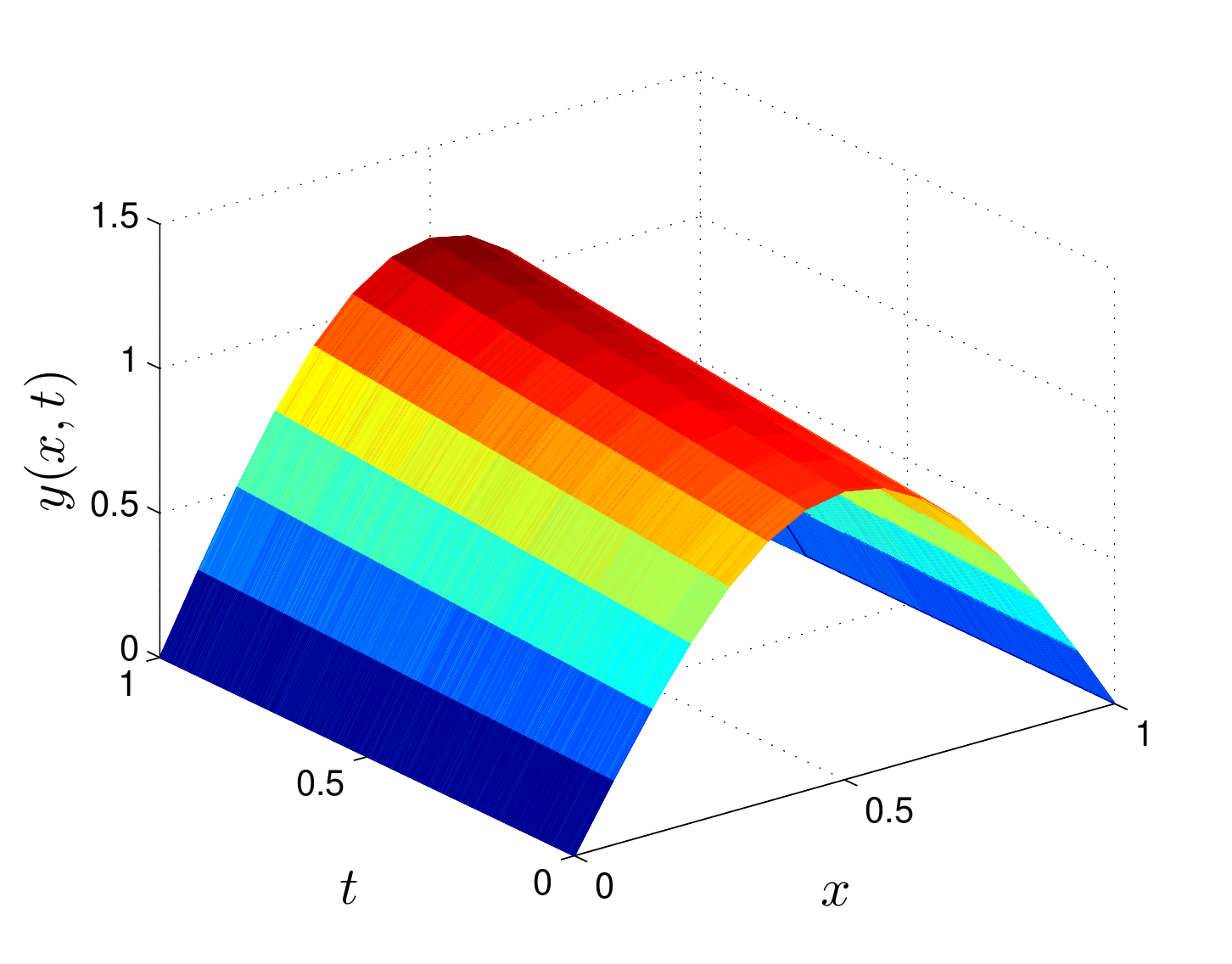}
\caption{Uncontrolled open-loop response  for Scenario 1.}\label{fig:Original U10}
\end{center}
\end{figure}

\begin{figure}\label{fig:U10}
\subfigure[Closed-loop response $y(x,t)$.] {\includegraphics[width=0.5\textwidth]{./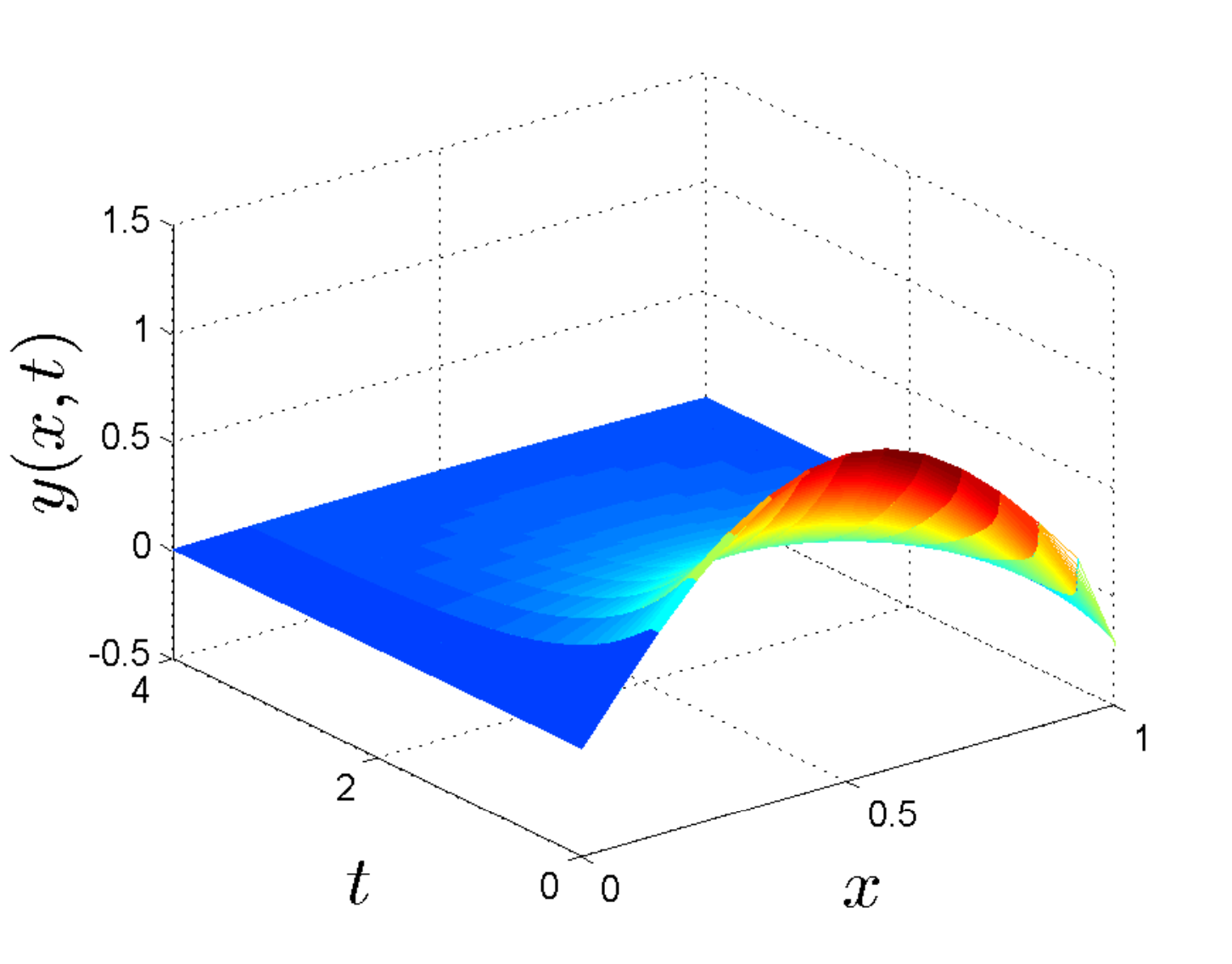}}\label{fig:U-10}
\subfigure[Optimal boundary control $y(1,t)$.] {\includegraphics[width=0.5\textwidth]{./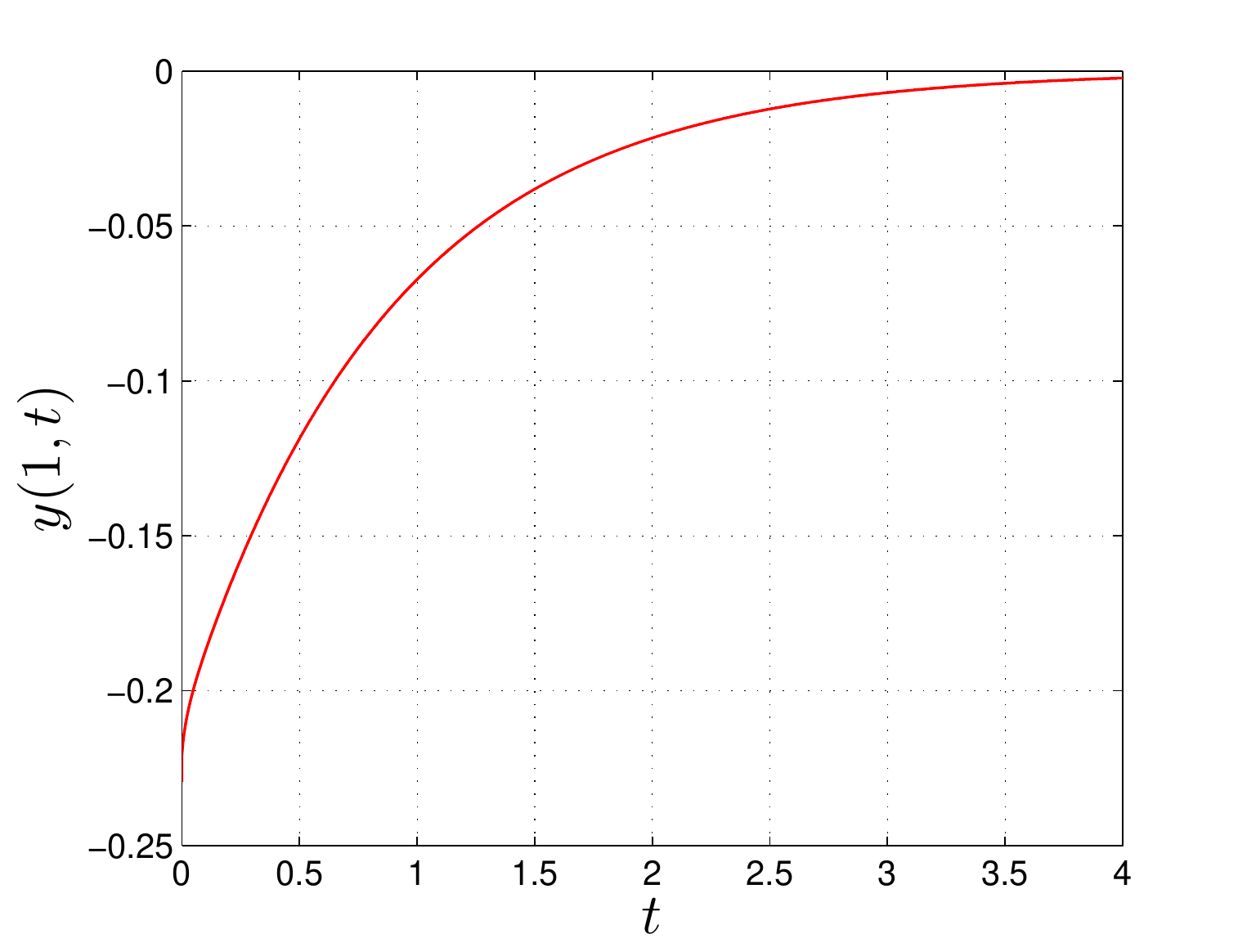}}\label{fig:y_1t-10}
\subfigure[Optimal kernel $k(x)$.] {\includegraphics[width=0.5\textwidth]{./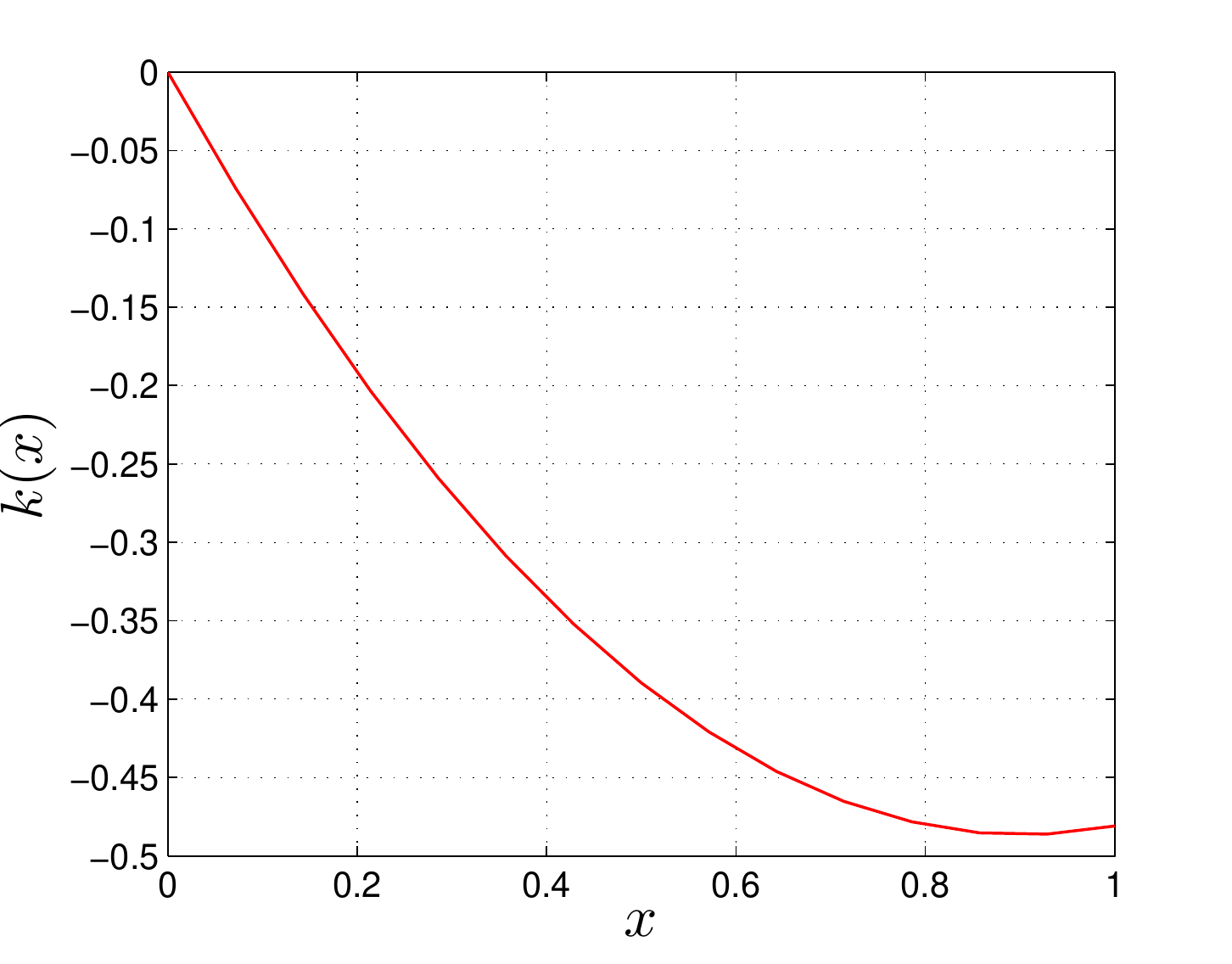}}\label{fig:K_kernels-10}
\subfigure[Left-hand side of (\ref{change-lambda-theda1-theda2}).] {\includegraphics[width=0.50\textwidth]{./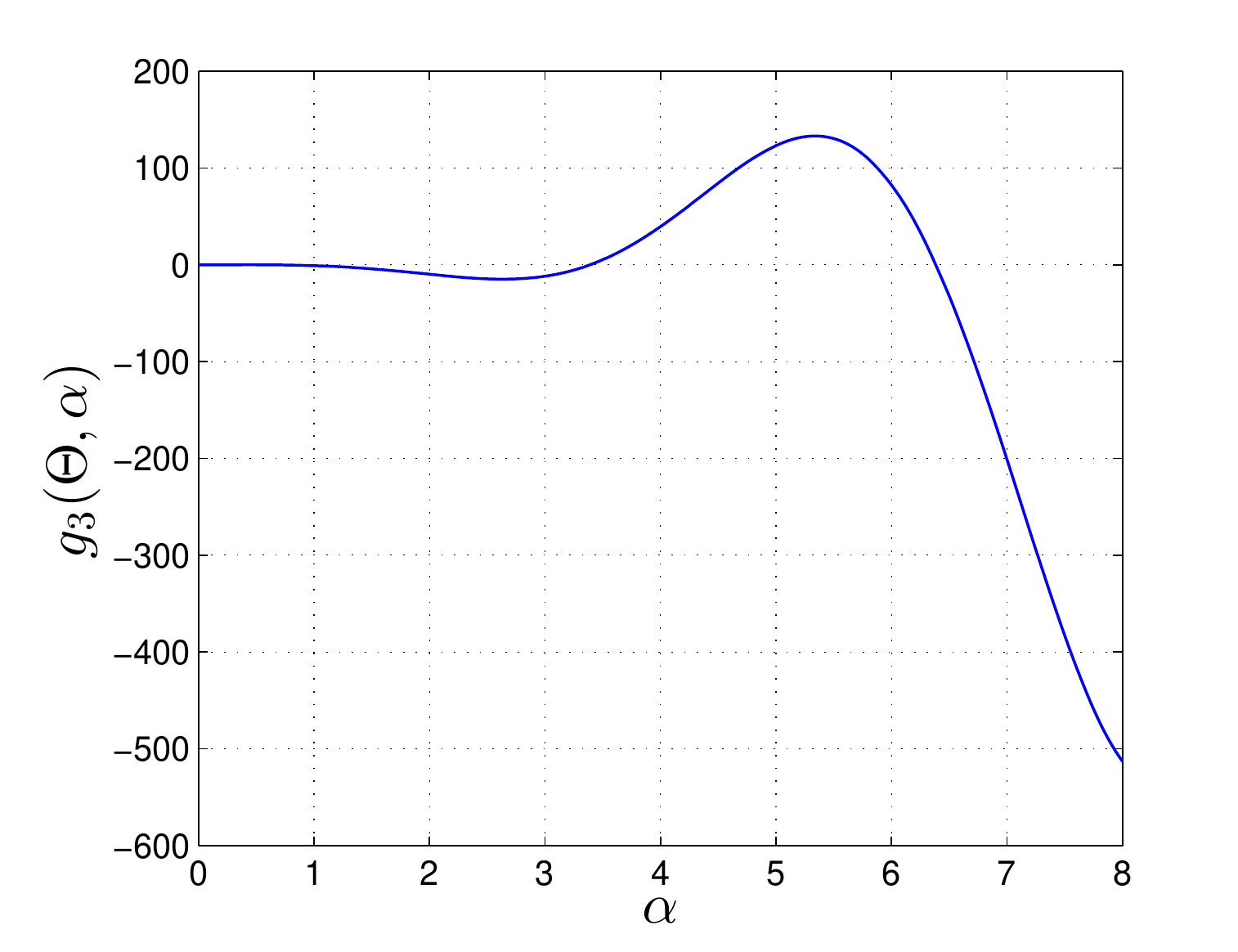}}\label{fig:eigenvalue10}
\caption{ Simulation results for Scenario 1 (optimized parameters $\theta^\ast_{1}=-1.0775$, $\theta^\ast_{2}=0.5966$, $\alpha^\ast=3.3486$). }
\end{figure}

\subsection{Scenario 2}

\begin{table}
\begin{center}
\caption{Solutions of (\ref{change-lambda-theda1-theda2}) and optimal span coefficients $Y_n$ in (\ref{cosfunctionJ}) for Scenario 2.}\label{tab:alpha11}
\begin{tabular}{cccc}
\toprule
$n$ &$\alpha^\ast_n$  &$\alpha^\ast_n/\pi$  &$Y_n$ \\
\toprule
1&  3.6056  &1.1476   & $-0.4185$ \\
2&  6.4595  &2.0562   & 1.4867  \\
3&  9.5520  &3.0404   & 1.4965  \\
4&  12.6561 &4.0285   & $-0.7676$  \\
5&  15.7817 &5.0234   & 0.4462  \\
6&  18.9096 &6.0191   & $-0.3391$  \\
7&  22.0433 &7.0166   & 0.2493   \\
8   &25.1778  &8.0143 & $-0.1992$  \\
9   &28.3147  &9.0128 & 0.1520   \\
10  &31.4520  &10.0114& $-0.1189$  \\
11  &34.5905  &11.0104& 0.0871 \\
12  &37.7291  &12.0095& $-0.0622$ \\
13  &40.8685  &13.0088& 0.0429 \\
14  &44.0081  &14.0086& 1.0061 \\
\toprule
\end{tabular}
\end{center}
\end{table}

For the second scenario, we choose $c=11$ and $y_0(x)=(1+x)\sin(\pi x)$.
~~The ~corresponding uncontrolled~~open-loop ~trajectory ~is ~shown in Figure~\ref{fig:Original U11}.~~Starting from the initial guess $(\theta_{1},\theta_{2}, \alpha)=(-1.0, 1.5, 0)$, our program converges after 26 iterations ~and 11.5767 seconds with an optimal cost value of $g_0=0.5515$. The corresponding optimal parameter values are ~$(\theta^\ast_{1},\theta^\ast_{2}, \alpha^\ast)=(-2.9141, 1.7791, 3.6056)$. We show the spatial-temporal response for the controlled system with optimized feedback parameters $(\theta_1^\ast, \theta_2^\ast )$ in
Figure~\ref{fig:figU11}(a). Again, as with Scenario 1, the controlled plant corresponding to the optimal solution of Problem P$_2$ is stable.  The optimal boundary control and optimal kernel function are shown in Figures ~\ref{fig:figU11}(b) and ~\ref{fig:figU11}(c), respectively. Figure \ref{fig:figU11}(d) shows the left-hand side of (\ref{change-lambda-theda1-theda2}). Note that $\alpha^\ast=3.6056$ is the first positive root, as required by Theorem \ref{theorem0}.
Using MATLAB to minimize (\ref{cosfunctionJ}) for $N=14$, we obtain an optimal cost of $1.249410\times 10^{-12}$, which indicates that the span condition in Theorem \ref{theorem0} holds. The values of $\alpha_n^\ast$ and $Y_n$ in (\ref{cosfunctionJ}) are given in Table \ref{tab:alpha11}.

\begin{figure}
\begin{center}
\includegraphics[width=3.8in]{./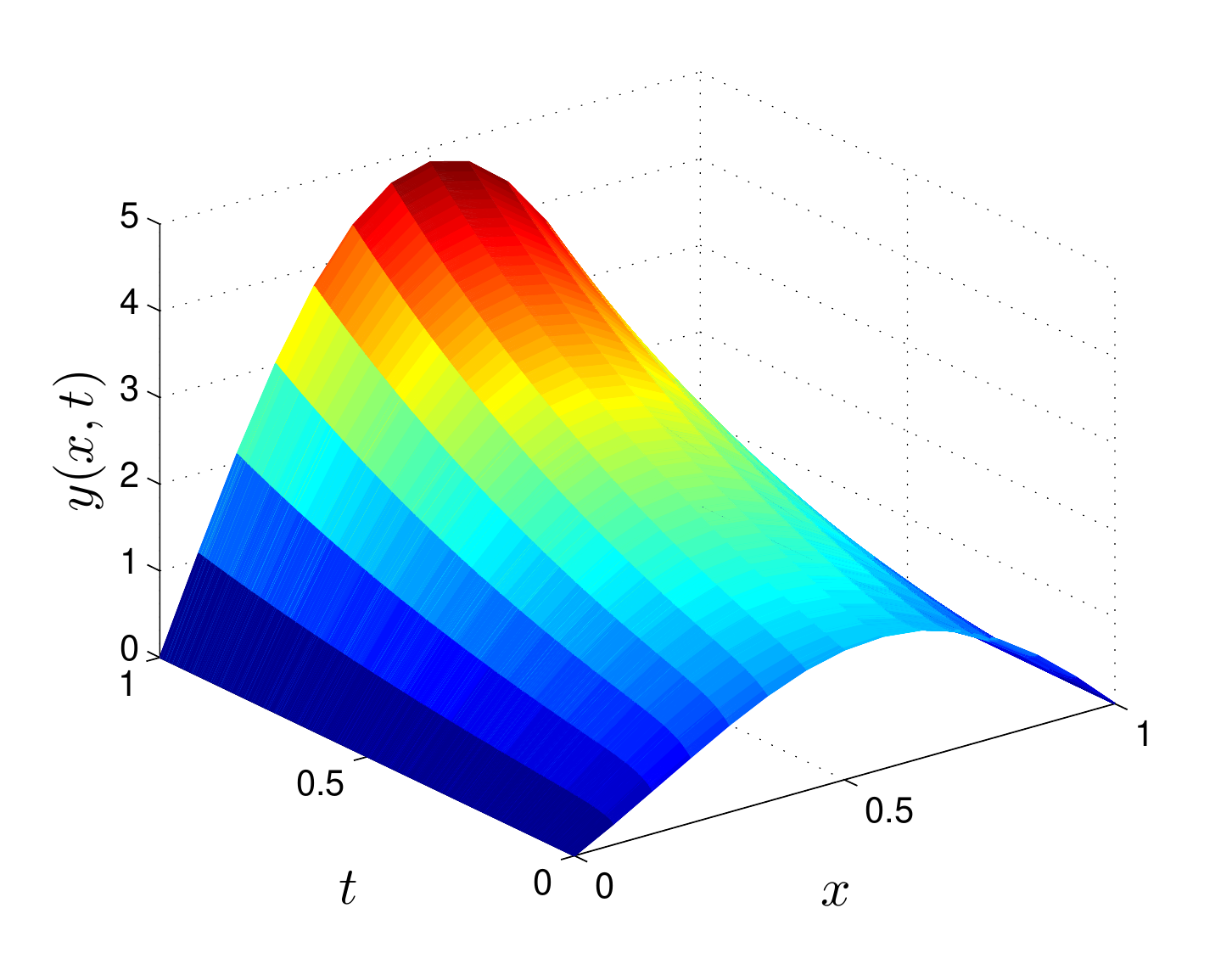}
\caption{Uncontrolled open-loop response for Scenario 2.}\label{fig:Original U11}
\end{center}
\end{figure}

\begin{figure}
\subfigure[Closed-loop response $y(x,t)$.] {\includegraphics[width=0.5\textwidth]{./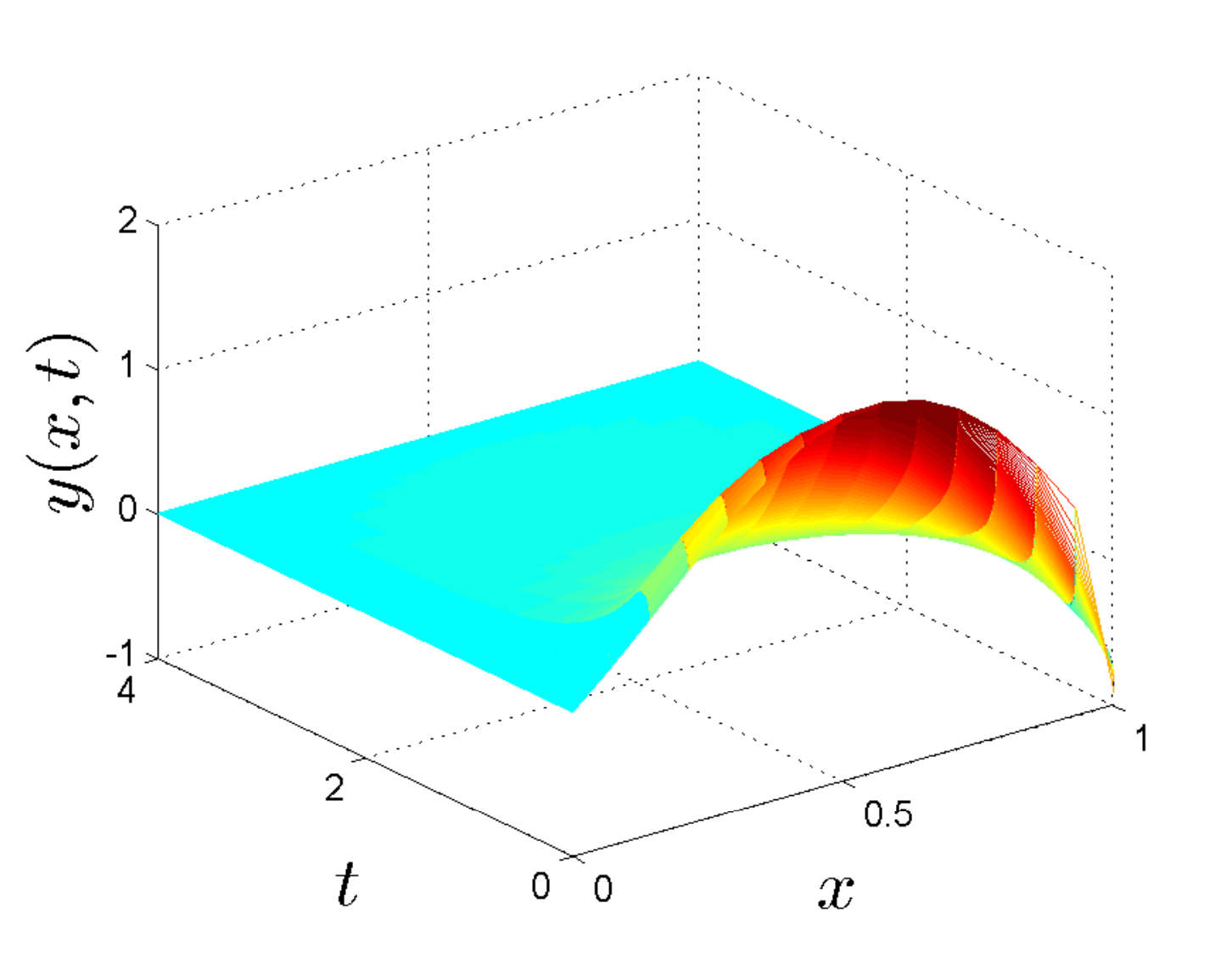}}\label{fig:U1-11}
\subfigure[Optimal boundary control $y(1,t)$.] {\includegraphics[width=0.5\textwidth]{./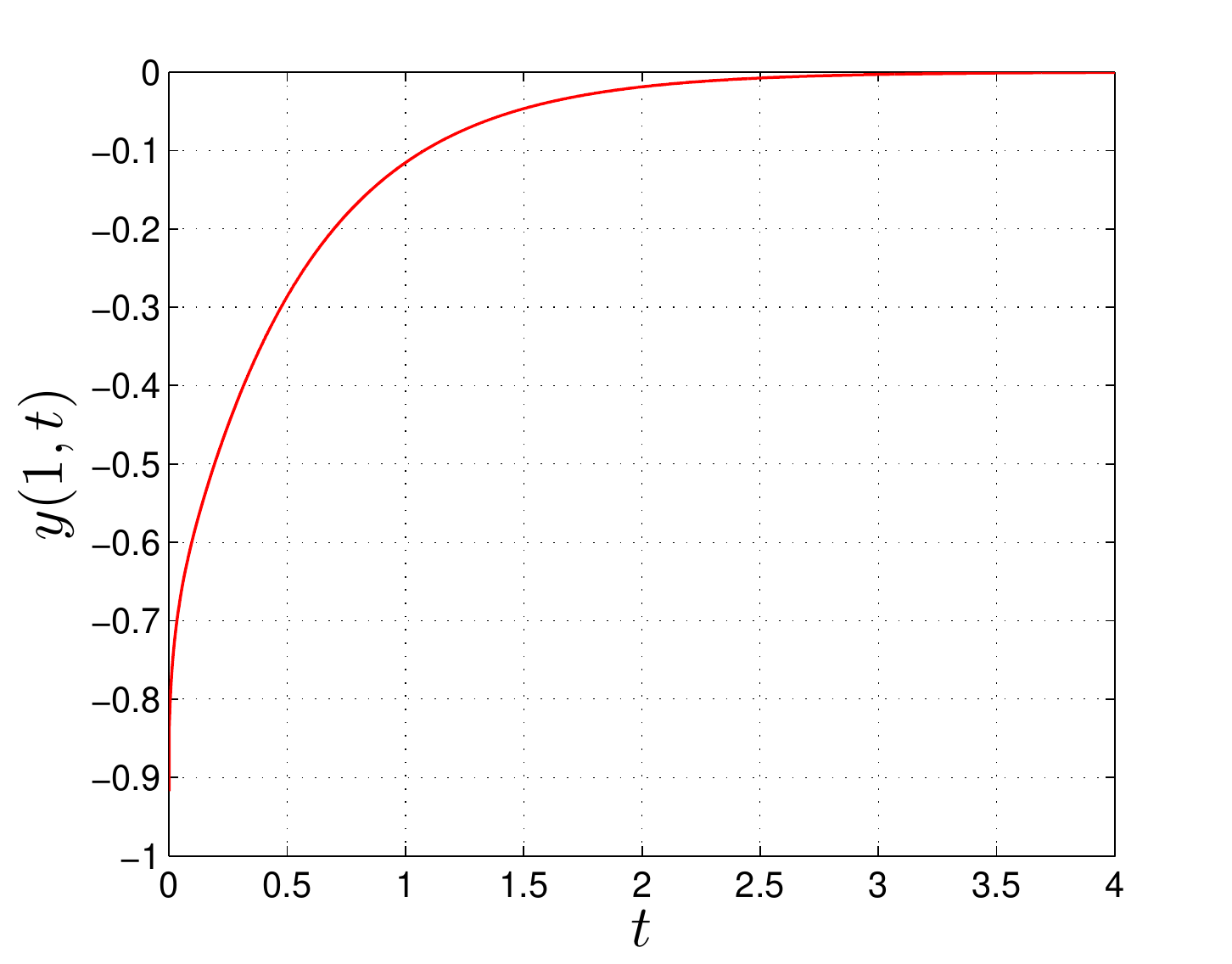}}\label{fig:y_1t-11}
\subfigure[Optimal kernel $k(x)$.] {\includegraphics[width=0.5\textwidth]{./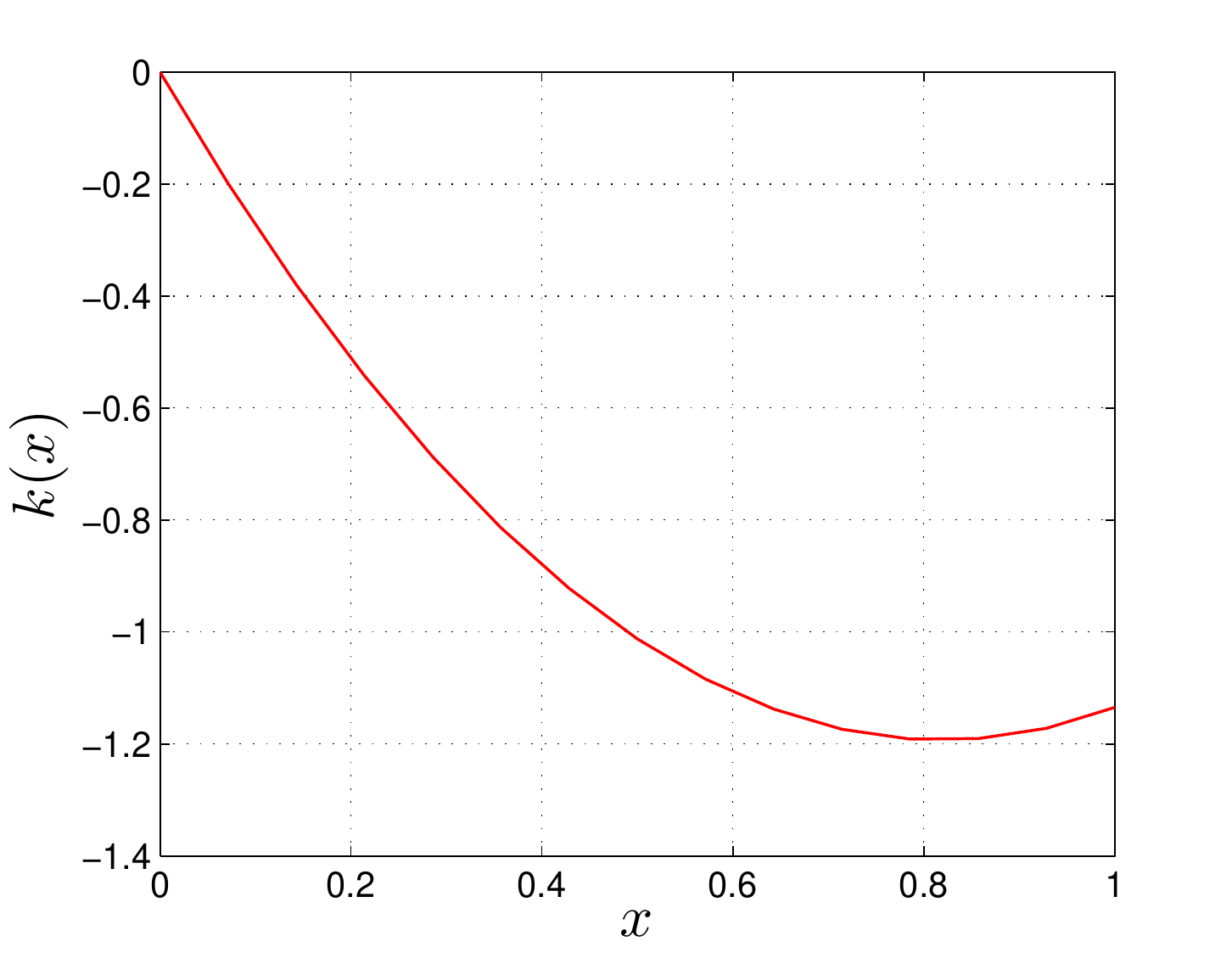}}\label{fig:K_kernels-11}
\subfigure[Left-hand side of (\ref{change-lambda-theda1-theda2}).] {\includegraphics[width=0.49\textwidth]{./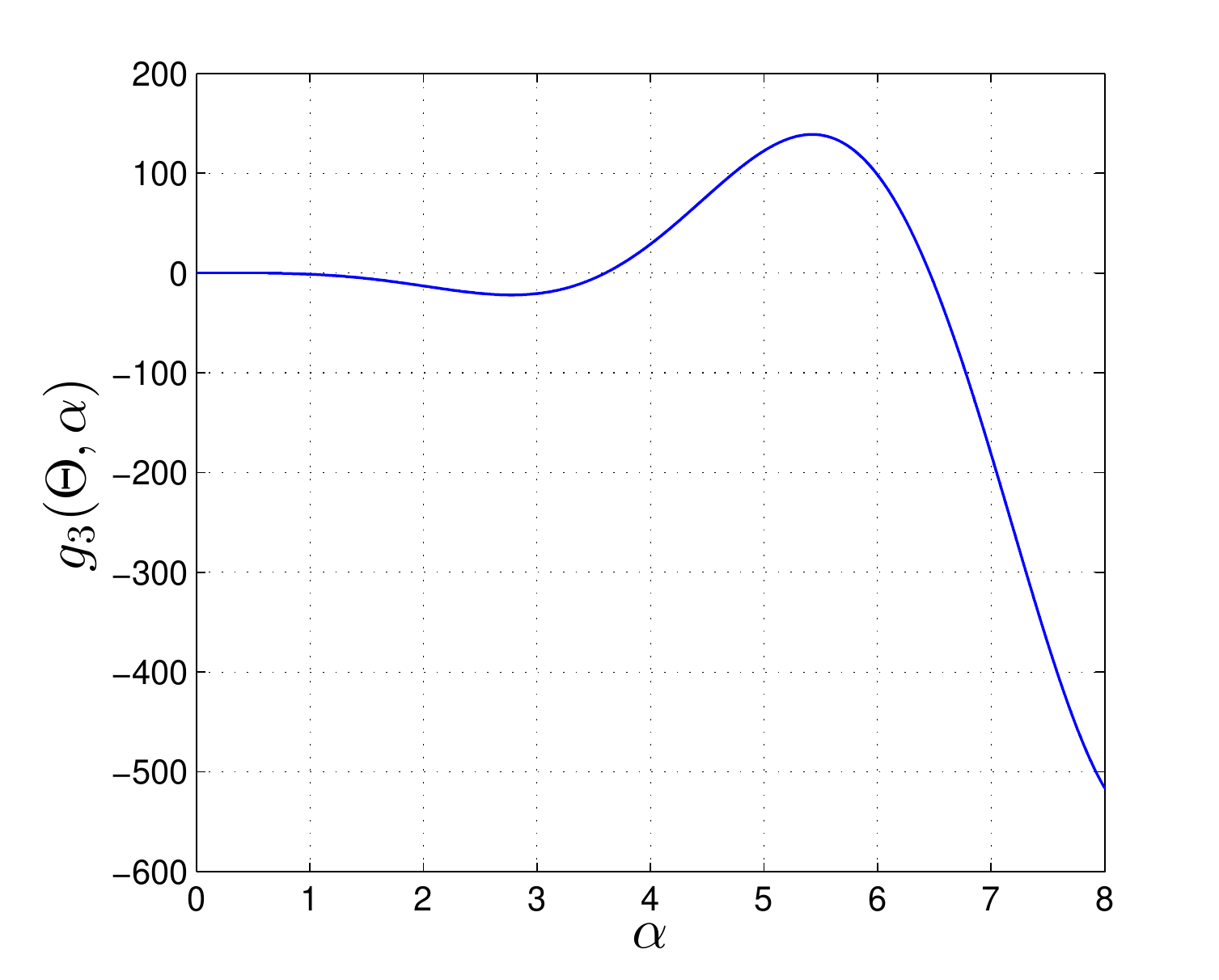}}\label{fig:eigenvalue11}
\caption{ Simulation results for Scenario 2 (optimized parameters $\theta^\ast_{1}=-2.9141$, $\theta^\ast_{2}=1.7791$, $\alpha^\ast=3.6056$). }
\label{fig:figU11}
\end{figure}

\subsection{Scenario 3}
\begin{table}
\begin{center}
\caption{Solutions of (\ref{change-lambda-theda1-theda2}) and corresponding optimal span coefficients $Y_n$ in (\ref{cosfunctionJ}) for Scenario 3.}\label{tab:alpha14}
\begin{tabular}{cccc}
\toprule
$n$ &$\alpha^\ast_n$  &$\alpha^\ast_n/\pi$  &$Y_n$  \\
\toprule
1&  4.1231  &1.3124   & $-0.4383$        \\
2&  6.6959  &2.1314   & 1.7274        \\
3&  9.7345  &3.0986   & 1.2549        \\
4&  12.7804 &4.0683   & $-0.7124$        \\
5&  15.8861 &5.0564   & 0.4225         \\
6&  18.9930 &6.0456   & $-0.3256$         \\
7&  22.1166 &7.0399   & 0.2415            \\
8   &25.2406  &8.0343 & $-0.1951$             \\
9   &28.3713  &9.0308 &  0.1510                 \\
10  &31.5022  &10.0274& $-0.1208$\\
11  &34.6366  &11.0251& 0.0922  \\
12  &37.7711  &12.0229& $-0.0723$\\
13  &40.9075  &13.0212& 0.0662\\
14  &44.0440  &14.0196&  1.0151 \\
\toprule
\end{tabular}
\end{center}
\end{table}
For the final scenario, we choose $c=14$ and $y_0(x)=(2+x)\sin(2.5\pi x)$.
The corresponding uncontrolled open-loop trajectory is shown in Figure~\ref{fig:Original U14}. Starting from the initial guess $(\theta_{1}, \theta_{2}, \alpha)=(-2.0, 1.5, 0)$, our ~program ~terminates ~after ~22 ~iterations ~and ~10.0226 ~seconds with an optimal cost value of $g_0=3.1006$. The corresponding optimal solution is  $(\theta^\ast_{1}, \theta^\ast_{2}, \alpha^\ast)=(-9.1266, 6.4093, 4.1231)$. 
The spatial-temporal response of the controlled plant corresponding to $(\theta^\ast_{1}, \theta^\ast_{2})$ is shown in
Figure~\ref{fig:figU14}(a), which clearly shows that the controlled system (\ref{closed-pde}) with optimized parameters $(\theta^\ast_{1}, \theta^\ast_{2})$ is stable. The optimal boundary control  and optimal kernel function are shown in Figures~\ref{fig:figU14}(b) and~\ref{fig:figU14}(c), respectively. Minimizing (\ref{cosfunctionJ}) for $N=14$ yields an optimal cost of $8.045397\times 10^{-15}$. We report the corresponding values of $\alpha_n^\ast$ and $Y_n$ in Table \ref{tab:alpha14}. Finally, Figure \ref{fig:figU14}(d) shows the left-hand side of equation (\ref{change-lambda-theda1-theda2}) corresponding to the optimized parameters.



\begin{figure}
\begin{center}
\includegraphics[width=3.8in]{./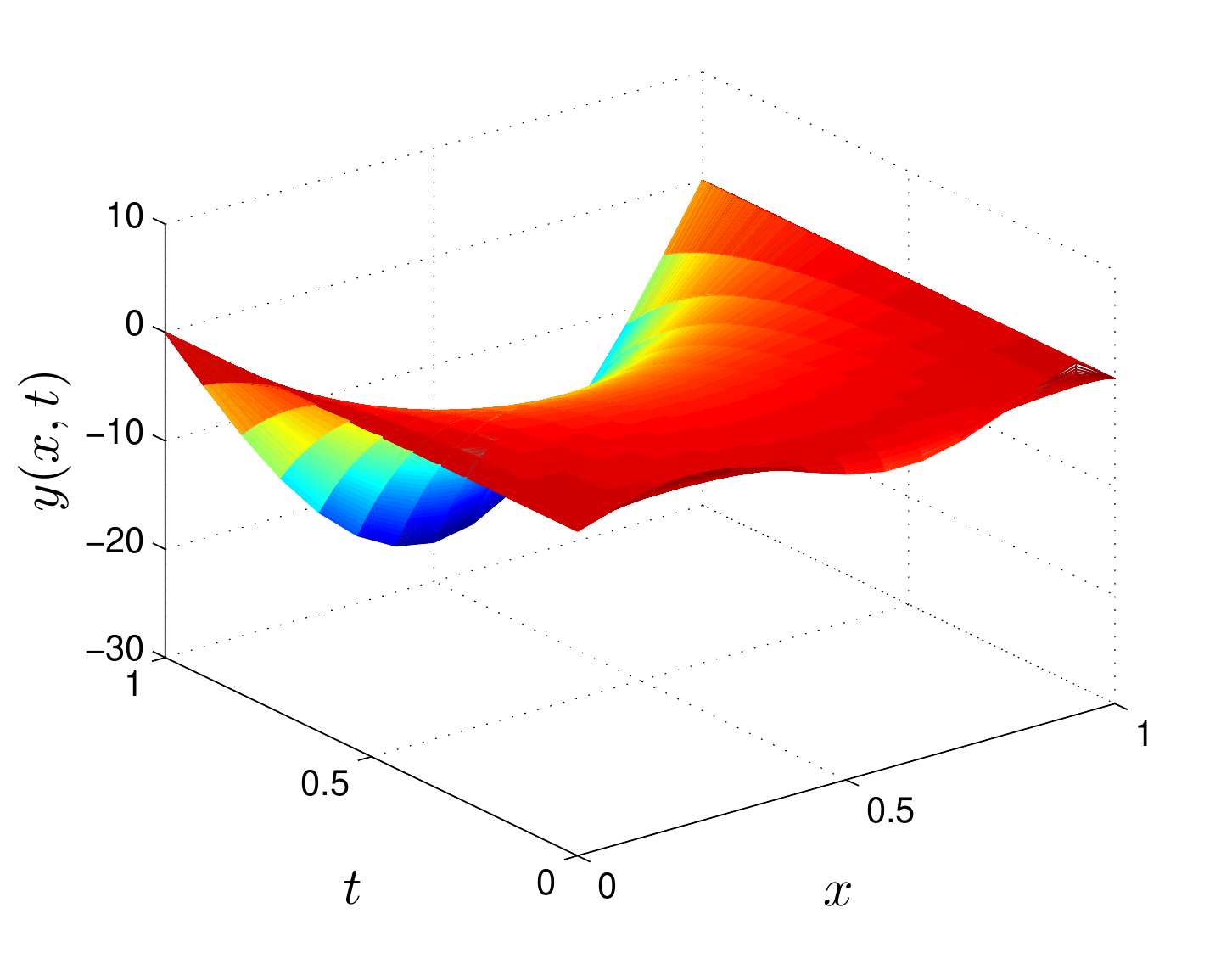}
\caption{Uncontrolled open-loop response for Scenario 3.}\label{fig:Original U14}
\end{center}
\end{figure}

\begin{figure}
\subfigure[Closed-loop response $y(x,t)$.] {\includegraphics[width=0.5\textwidth]{./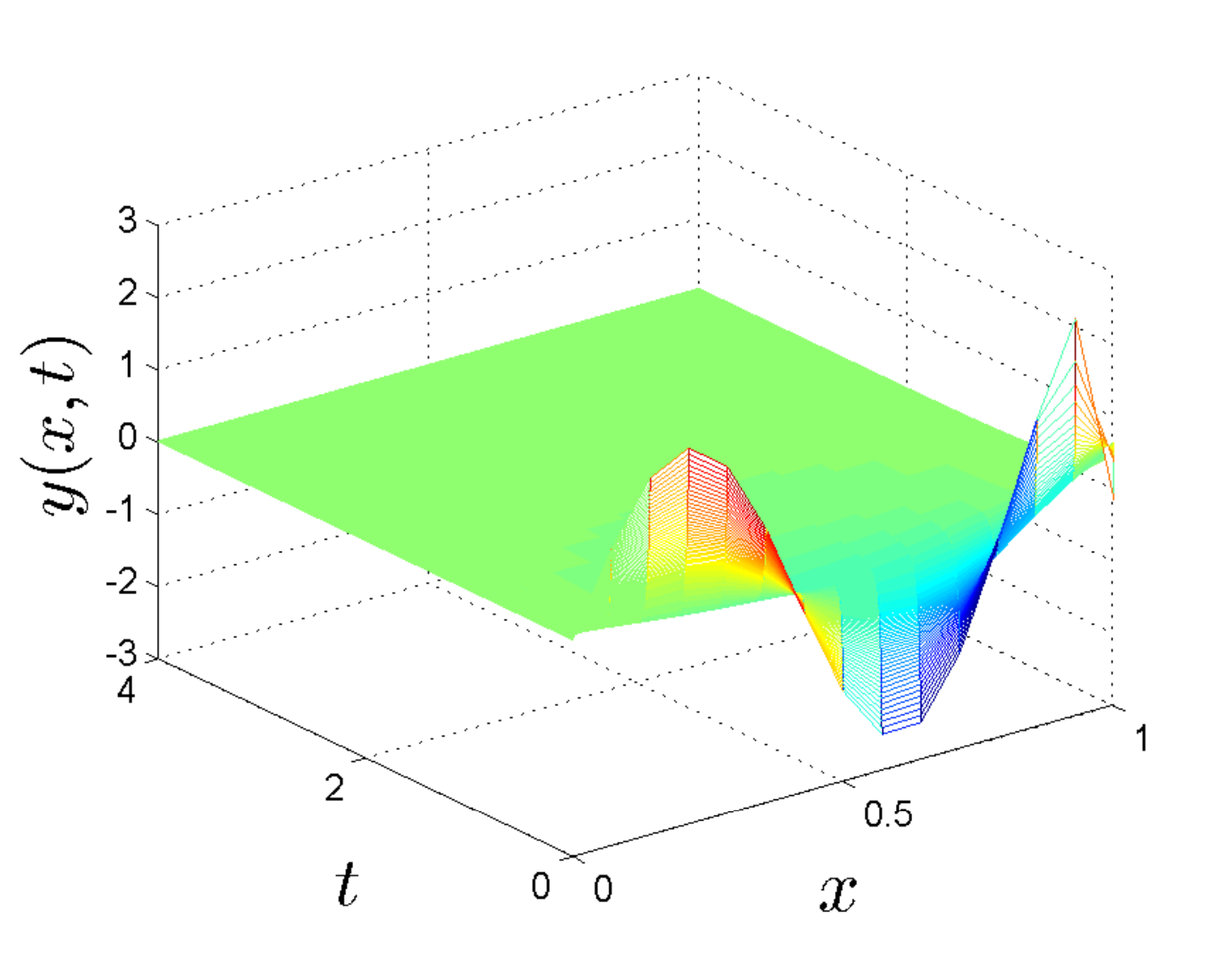}}\label{fig:U1-14}
\subfigure[Optimal boundary control $y(1,t)$.] {\includegraphics[width=0.5\textwidth]{./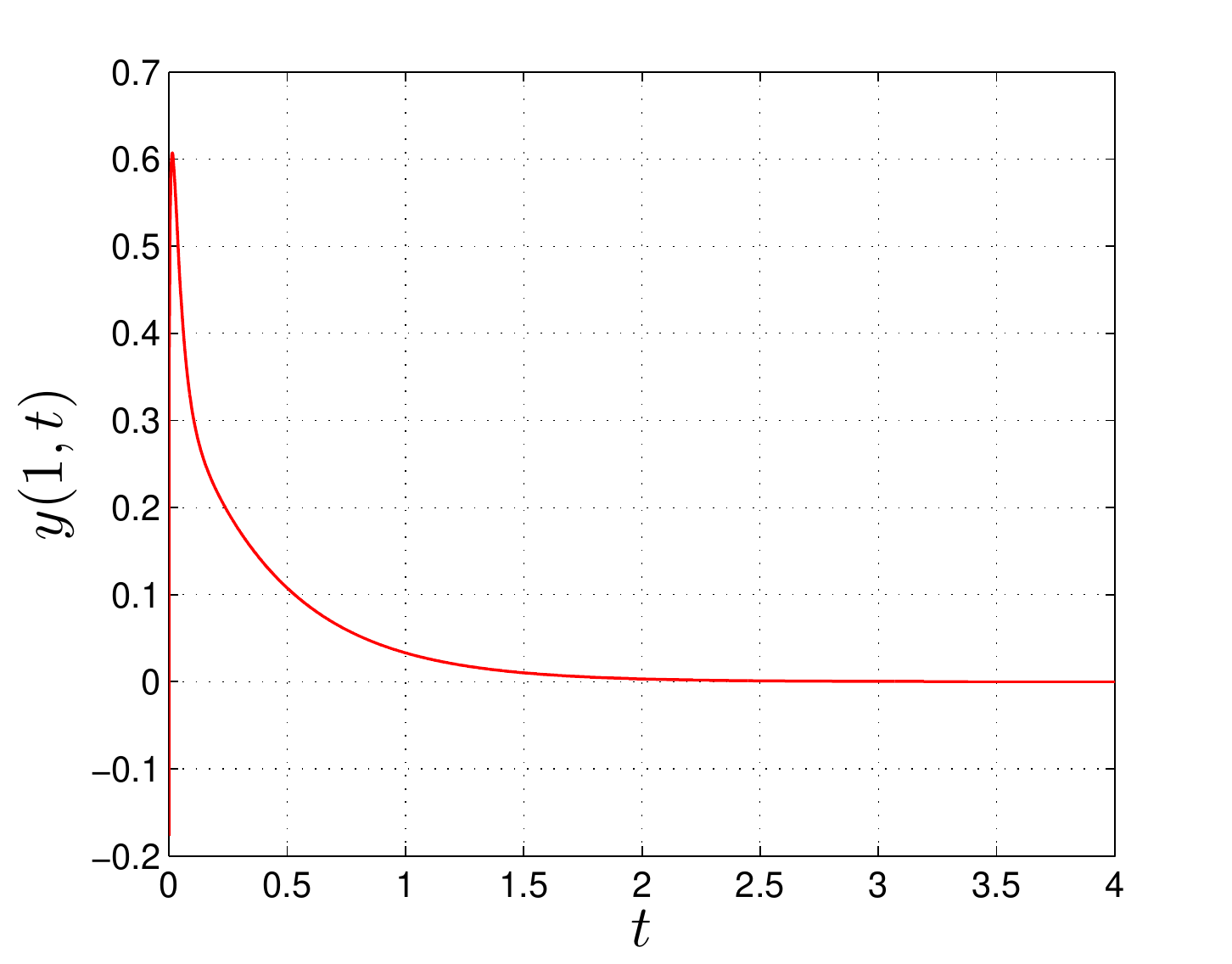}}\label{fig:y_1t-14}
\subfigure[Optimal kernel $k(x)$.] {\includegraphics[width=0.5\textwidth]{./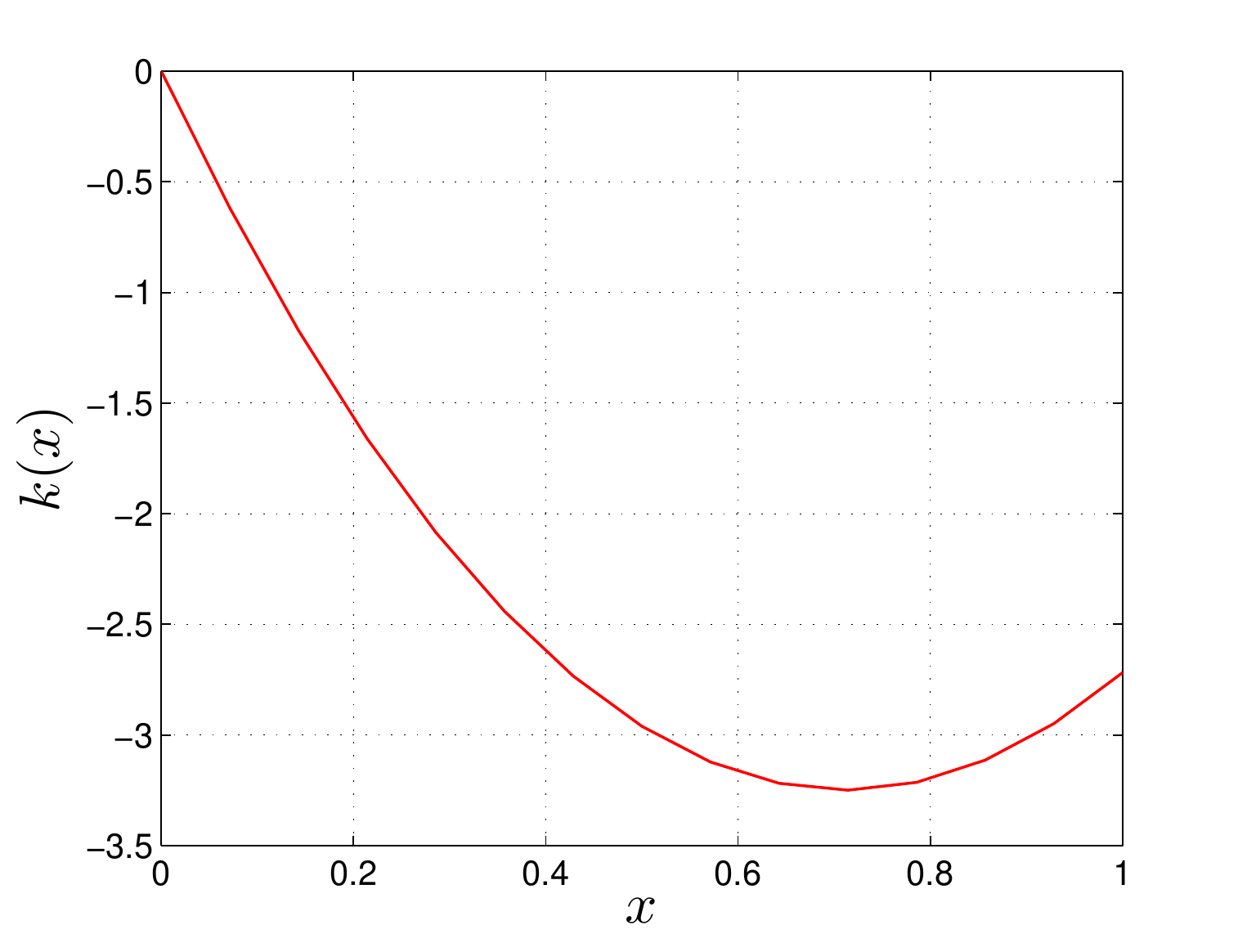}}\label{fig:K_kernels-14}
\subfigure[Left-hand side of (\ref{change-lambda-theda1-theda2}).] {\includegraphics[width=0.48\textwidth]{./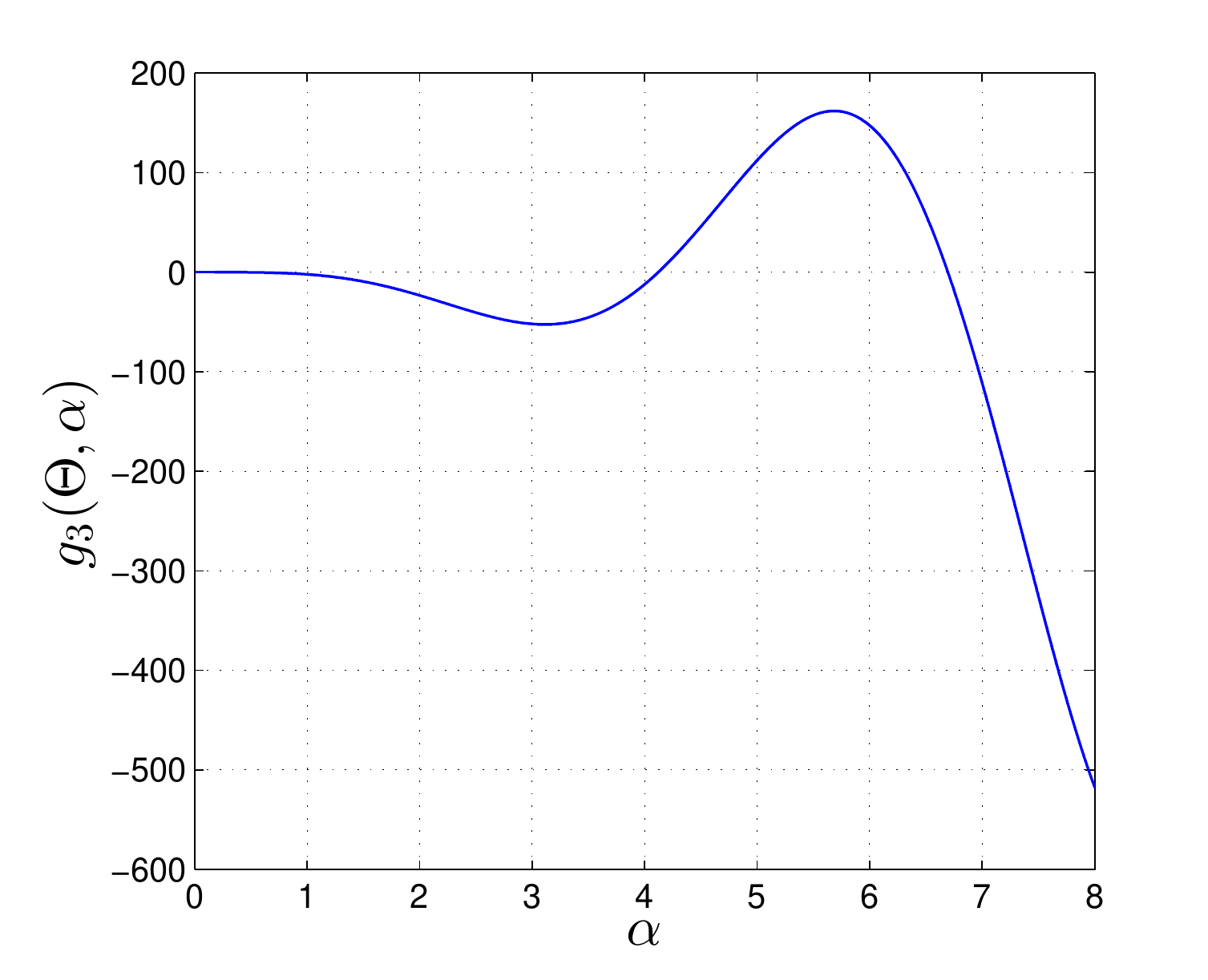}}\label{fig:eigenvalue14}
\caption{ Simulation results for Scenario 3 (optimized parameters $\theta^\ast_{1}=-9.1266$, $\theta^\ast_{2}=6.4093$, $\alpha^\ast=4.1231$). }
\label{fig:figU14}
\end{figure}
\section{Conclusion}\label{sec:conclusion}
In this paper, we have introduced a new gradient-based optimization approach for boundary stabilization of parabolic PDE systems. As with the well-known LQ control and backstepping synthesis approaches, our new approach involves expressing the boundary controller as an integral state feedback in which a kernel function needs to be designed judiciously. However, unlike the LQ control and backstepping approaches, we do not determine the feedback kernel by solving  Riccati-type or Klein-Gorden-type PDEs; instead, we approximate the feedback kernel by a quadratic function and then optimize the quadratic's coefficients using dynamic optimization techniques. This approach requires solving a so-called ``costate PDE'', which is much easier to solve numerically than the Riccati and Klein-Gorden PDEs. Indeed, as shown in Section \ref{sec:NumericalAlgorithms}, the costate PDE can be solved easily using the finite-difference method. Based on the work in this paper, we have identified several unresolved research questions described as follows: ~(i) Is it possible to prove, ~or at least weaken, the ~linear ~span ~condition ~in ~Theorem \ref{theorem0}? ~(ii) Can the proposed kernel optimization approach be applied to other classes of PDE plant models? ~(iii) Is it possible to develop methods for minimizing cost functional (\ref{optcostfun}) over an infinite time horizon? These issues will be explored in future work.
\section*{Acknowledgements}
This paper is dedicated to Professor Kok Lay Teo on the occasion of his 70th birthday. Professor Teo has been valued  colleague and mentor to each of the authors of this paper. The gradient computation procedure in Section \ref{sec:VariationalAnalysis} was inspired by Professor Teo's seminal work in \cite{Teo1991}; see also the recent survey papers \cite{lin2013optimal,linsurvey2013}.

\appendix

\section{Proof of Lemma \ref{lemma1}}

We prove the lemma in three steps.

\subsection{Preliminaries}
Let
\begin{equation*}\label{changeQx}
\begin{aligned}
Q(\alpha)&=\sqrt{(\theta_1\alpha^2+\theta_2\alpha^2-2\theta_2)^2+(\alpha^3-\theta_1\alpha-2\theta_2\alpha)^2}\\
&=\sqrt{\alpha^6+(\theta_1^2+2\theta_1\theta_2+\theta_2^2-2\theta_1-4\theta_2)\alpha^4+\theta_1^2\alpha^2+4\theta_2^2}.
\end{aligned}
\end{equation*}
Furthermore, let $\varphi(\alpha)\in (-\pi,\pi]$ be the unique angle satisfying
\begin{equation*}\label{cosfunction}
\cos(\varphi(\alpha))=\frac{\alpha^3-\theta_1\alpha-2\theta_2\alpha}{Q(\alpha)},
\end{equation*}
and
\begin{equation*}\label{sinfunction}
\sin(\varphi(\alpha))=\frac{\theta_1\alpha^2+\theta_2\alpha^2-2\theta_2}{Q(\alpha)}.
\end{equation*}
Using the definitions of $Q(\alpha)$ and $\varphi(\alpha)$, equation (\ref{change-lambda-theda1-theda2}) can be rewritten as follows:
\begin{equation*}
Q(\alpha)\sin(\varphi(\alpha))\cos(\alpha)+Q(\alpha)\cos(\varphi(\alpha))\sin(\alpha)=-2\theta_2.
\end{equation*}
Thus, using the angle sum trigonometric identity, we obtain
\begin{equation}\label{change-lambda-theda1-theda3}
Q(\alpha)\sin(\alpha+\varphi(\alpha))=-2\theta_2.
\end{equation}
Now, under condition (\ref{constraint00}), $Q(\alpha)\rightarrow \infty$ as $\alpha\rightarrow \infty$.
Furthermore,
\begin{equation*}
 \lim_{\alpha\rightarrow +\infty}\cos(\varphi(\alpha))=1, \quad \lim_{\alpha\rightarrow +\infty}\sin(\varphi(\alpha))=0.
\end{equation*}
Hence, $\varphi(\alpha)\rightarrow 0$ as $\alpha\rightarrow \infty$.
\subsection{Angle $\varphi(\alpha)$ is  Continuous at all Sufficiently Large $\alpha$}
Since $\varphi(\alpha)\rightarrow 0$ as $\alpha\rightarrow \infty$, there exists a constant $\bar \alpha$ such that $-\frac{1}{4}\pi<\varphi(\alpha)<\frac{1}{4}\pi$ for all $\alpha>\bar\alpha$. Consider an arbitrary point $\alpha'>\bar\alpha$. We will show that $\varphi(\cdot)$ is continuous at $\alpha'$.

Let $\delta>0$. In view of the definition of $\sin(\varphi(\alpha))$, there exists an $\varepsilon>0$ such that
\begin{equation}\label{A1}
|\alpha-\alpha'|<\varepsilon \quad\implies\quad -\frac{1}{\sqrt{2}}\delta<\sin(\varphi(\alpha))-\sin(\varphi(\alpha'))<\frac{1}{\sqrt{2}}\delta.
\end{equation}
Now, using Taylor's Theorem,
\begin{equation}\label{A2}
\sin(\varphi(\alpha))-\sin(\varphi(\alpha'))=\cos(\zeta)(\varphi(\alpha)-\varphi(\alpha')),
\end{equation}
where $\zeta$ belongs to the interval bounded by $\varphi(\alpha)$ and $\varphi(\alpha')$. Suppose $\alpha$ satisfies $|\alpha-\alpha'|<\min(\varepsilon,\alpha'-\bar\alpha)$.
Then
\begin{equation*}
-\frac{1}{4}\pi<\varphi(\alpha)<\frac{1}{4}\pi, \quad -\frac{1}{4}\pi<\varphi(\alpha')<\frac{1}{4}\pi.
\end{equation*}
Hence,
\begin{equation*}
-\frac{1}{4}\pi<\zeta<\frac{1}{4}\pi
\end{equation*}
and
\begin{equation}\label{A3}
\cos\zeta>\frac{1}{\sqrt{2}}.
\end{equation}
Combining (\ref{A1})-(\ref{A3}) yields
\begin{equation*}
\frac{1}{\sqrt{2}}\delta>|\sin(\varphi(\alpha))-\sin(\varphi(\alpha'))|=|\cos(\zeta)|\cdot|\varphi(\alpha)-\varphi(\alpha')|
\geq\frac{1}{\sqrt{2}}|\varphi(\alpha)-\varphi(\alpha')|.
\end{equation*}
Hence, we  have established the following implication:
\begin{equation*}
|\alpha-\alpha'|<\min(\varepsilon,\alpha'-\bar\alpha)\quad \implies \quad  |\varphi(\alpha)-\varphi(\alpha')|<\delta.
\end{equation*}
This shows that $\varphi(\cdot)$ is continuous at $\alpha'$, as required.

\subsection{Roots of Equation (A.1)}
Let $\epsilon\in(0,\frac{1}{2}\pi)$ and define
\begin{equation*}
  a_k=k\pi-\epsilon, \quad b_k=k\pi+\epsilon.
\end{equation*}
Clearly, for each integer $k\geq 0$, $a_k<b_k<a_{k+1}$ and
\begin{equation*}
\sin(a_k)=
\begin{cases}
\sin(\epsilon),&\text{if $k$ is odd},\\
-\sin(\epsilon),&\text{if $k$ is even},
\end{cases}
\end{equation*}
\begin{equation*}
\sin(b_k)=
\begin{cases}
-\sin(\epsilon),&\text{if $k$ is odd},\\
\sin(\epsilon),&\text{if $k$ is even}.
\end{cases}
\end{equation*}
Using Taylor's Theorem, we have
\begin{equation*}
  \sin(\alpha+\varphi(\alpha))=\sin(\alpha)+\cos(\zeta)\varphi(\alpha),
\end{equation*}
where $\zeta=\zeta(\alpha)$ belongs to the interval bounded by $\alpha$ and $\alpha+\varphi(\alpha)$.
Thus,
\begin{equation}\label{equ:tylorequation}
  |\sin(\alpha+\varphi(\alpha))-\sin(\alpha)|=|\cos(\zeta)\varphi(\alpha)|\leq |\varphi(\alpha)|.
\end{equation}
Since $\varphi(\alpha)\rightarrow 0$ as $\alpha\rightarrow \infty$, there exists
an integer $k_1\geq 1$ such that for all $k\geq k_1$,
\begin{equation*}
\begin{aligned}
  |\varphi(a_k)|< \frac{1}{2}\sin(\epsilon), \quad
  |\varphi(b_k)|< \frac{1}{2}\sin(\epsilon).
\end{aligned}
\end{equation*}
Hence, substituting $\alpha=a_k$ and $\alpha=b_k$ into (\ref{equ:tylorequation}) gives, for $k\geq k_1$,
\begin{equation*}
\sin(a_k+\varphi(a_k))
\begin{cases}
> \frac{1}{2}\sin(\epsilon),&\text{if $k$ is odd},\\
< -\frac{1}{2}\sin(\epsilon),&\text{if $k$ is even},
\end{cases}
\end{equation*}
\begin{equation*}
\sin(b_k+\varphi(b_k))
\begin{cases}
< -\frac{1}{2}\sin(\epsilon),&\text{if $k$ is odd},\\
> \frac{1}{2}\sin(\epsilon),&\text{if $k$ is even}.
\end{cases}
\end{equation*}
Since $Q(\alpha)\rightarrow \infty$ as $\alpha\rightarrow \infty$, there exists an integer $k_2\geq 1$ such that for all $k\geq k_2$,
\begin{equation*}\label{changeQx}
-\frac{1}{2} Q(a_k)\sin(\epsilon)\leq-2\theta_2\leq\frac{1}{2} Q(a_k)\sin(\epsilon),
\end{equation*}
\begin{equation*}\label{changeQx}
-\frac{1}{2}Q(b_k)\sin(\epsilon) \leq-2\theta_2\leq  \frac{1}{2}Q(b_k)\sin(\epsilon).
\end{equation*}
Thus, for all $k\geq\max\{k_1,k_2\}$,
\begin{equation*}
Q(a_k)\sin(a_k+\varphi(a_k))
\begin{cases}
> \frac{1}{2}Q(a_k)\sin(\epsilon)\geq-2\theta_2,&\text{if $k$ is odd},\\
< -\frac{1}{2}Q(a_k)\sin(\epsilon)\leq-2\theta_2,&\text{if $k$ is even},
\end{cases}
\end{equation*}
\begin{equation*}
Q(b_k)\sin(b_k+\varphi(b_k))
\begin{cases}
< -\frac{1}{2}Q(b_k)\sin(\epsilon)\leq-2\theta_2,&\text{if $k$ is odd},\\
> \frac{1}{2}Q(b_k)\sin(\epsilon)\geq-2\theta_2,&\text{if $k$ is even}.
\end{cases}
\end{equation*}
Since $\varphi$ is continuous when $\alpha$ is large, this implies that, for all sufficiently large $k$,  there exists a solution of (\ref{change-lambda-theda1-theda3}) within the interval $[a_{k},b_k]$. The result follows immediately.


\end{document}